\newtheorem{theorem}{Theorem}[section]
\newtheorem{assumption}[theorem]{Assumption}
\newtheorem{definition}[theorem]{Definition}
\newtheorem{corollary}[theorem]{Corollary}
\newtheorem{lemma}[theorem]{Lemma}
\theoremstyle{definition}
\newtheorem{example}{Example}[section]
\newtheorem{remark}[theorem]{Remark}
\newcommand{\range}{\mbox{\rm Range}}
\newcommand{\Null}{\mbox{\rm Null}}
\newcommand{\tr}{\mbox{\rm tr}}
\newcommand{\rank}{\mbox{\rm rank}}
\newcommand{\re}{\mbox{\rm Re}}
\newcommand{\im}{\mbox{\rm Im}}
\title{On the tightness of an SDP relaxation for homogeneous QCQP with three real or four complex homogeneous constraints\thanks{This work was supported by the National Natural Science Foundation of China (Grants Nos. 11871115,12171052,11971073, 12171051). This work was also supported by Beijing Natural Science Foundation (Grants No. Z220004).}}
\author{Wenbao Ai$^\dag$, Wei Liang$^\dag$, Jianhua Yuan \thanks{School of Science, Beijing University of Posts and Telecommunications, Beijing 100876, China 
({aiwb@bupt.edu.cn, liangwei@bupt.edu.cn, jianhuayuan@bupt.edu.cn}). Corresponding author's Email: aiwb@bupt.edu.cn}}
\date{}
\begin{document}
\maketitle

\begin{abstract}
	In this paper, we consider the  problem of minimizing a general homogeneous quadratic function, subject to three real or four complex homogeneous quadratic inequality or equality constraints. For this problem, we present a sufficient and  necessary test condition to detect whether its typical semidefinite programming (SDP) relaxation is tight or not. This test condition is easily verifiable, and is based on only an optimal solution pair of the SDP relaxation and its dual. When the tightness is confirmed, a global optimal solution of the original problem is found simultaneously in polynomial-time.  Furthermore, as an application of the test condition, S-lemma and Yuan's lemma are generalized to three real and four complex quadratic forms first under certain exact conditions, which improves some classical results in literature. Finally, numerical experiments demonstrate  the numerical effectiveness of the test condition.
\end{abstract}

{\bf Key words.}
 quadratically constrained quadratic programming,   SDP relaxation, rank-one decomposition, tight relaxation, global optimal solutions

{\bf AMS subject classifications. }
90C20, 90C22, 90C26

\section{Introduction}
In this paper we consider the following homogeneous quadratically constrained quadratic programming (HQCQP) over the real or complex number field (denoted by $\mathcal{F}$, $\mathcal{F}=\mathcal{R}$ or $\mathcal{C}$):
\begin{equation}\label{eq:basic-model}
	\begin{array}{lll}
		(HQP_{m})&\mbox{minimize}   &x^HA_0x\\
		&\mbox{subject to} &x^HA_ix\unlhd_i\,c_i,\ i=1,2,\cdots,m,
	\end{array}
\end{equation}
where $x\in\mathcal{F}^n$; $A_0,\,A_1,\cdots,A_m\in\mathcal{HF}^{n}$ are $n\times n$ Hermitian matrices over the number field $\mathcal{F}$;  $(c_1,c_2,\cdots, c_m)^T\in\mathcal{R}^m$ and $c_m\neq 0$; $\,\,\unlhd_i\in\{\leq,=\}$ ($i=1,2,\cdots,m$), that is, each constraint can freely be  either an inequality or an equality. 

The model $(HQP_{m})$ is very inclusive and has wide application values. Firstly, any nonhomogeneous QCQP problem with $m-1$ constraints can turn into one homogeneous QCQP problem with $m$ homogeneous constraints described by $(HQP_{m})$. Secondly, many optimization problems in Engineering are just homogeneous QCQP problems as in \eqref{eq:basic-model}.  In general, the model $(HQP_{m})$ is a NP-hard problem. So this paper will focus on the following two cases: $m=3$ for real-valued and $m=4$ for complex-valued.  Even for the two cases, due to none of the matrices $A_i$ ($i=0,1,,\cdots,m$) is restricted to being positive semidefinite, when and how to obtain a global optimal solution of $(HQP_{m})$ from its SDP relaxation is still a pending issue.



If $\mathcal{F}=\mathcal{R}$ and $m\leq2$, it was confirmed long before that the SDP relaxation of  $(HQP_{m})$ is tight. The corresponding research can be traced back to S-lemma of Yakubovich \cite{Yakubo1971} and matrix rank-one decomposition procedure of Sturm and Zhang \cite{Sturm2003On}, which show that any quadratic programming with one quadratic inequality constraint (QIC1QP) has strong duality and has no optimality gap with its SDP relaxation. In 2016, Xia, Wang and Sheu\cite{Xia2016} extended Finsler's lemma to two nonhomogeneous quadratic functions, which reveals first that a quadratic programming with one quadratic equality constraint (QEC1QP) has conditionally strong duality. If $\mathcal{F}=\mathcal{R}$ and $m=3$, the solution situation of $(HQP_{m})$ becomes far more complicated than that at $m=2$. So researchers have  first studied a special QCQP problem called  Celis-Dennis-Tapia (CDT) subproblem, which is an extended trust region subproblem with a ball constraint and a general quadratic inequality constraint    (\cite{AiZhang2009,Beck2009,Celis1985,Chen2000,Chen2001,Yuan2017new,Yuan1990,Yuan1991,Ye2003New,Zhang1992}). Among them there is one result closely related to the current paper, which is a sufficient and necessary test condition  presented by Ai and Zhang \cite{AiZhang2009} in 2009, to detect whether the SDP relaxation of the CDT subproblem is tight or not. 
In 2021, the test condition was improved by Cheng and Martins \cite{Cheng2021}, so as to be made available for two general quadratic inequality constraints. In addition,  Nguyen, Nguyen and Sheu \cite{Nguyen2019} have extended the test condition for the homogeneous QCQP with two homogeneous quadratic inequality constraints and one unit sphere constraint. 

When $\mathcal{F}=\mathcal{C}$, due to the complex S-lemma of Fradkov and Yakubovich \cite{Fradkov1973} and the complex matrix rank-one decomposition procedure of Huang and Zhang \cite{Huang2007},  any complex QCQP with two quadratic inequality constraints  has strong duality and has no optimality gap with its SDP relaxation. Besides, Ai, Huang and Zhang \cite{AiZhang2011} presented several new complex rank-one solution theorems. Recently, He, Jiang and Zhu \cite{He2021} extended the rank-one decomposition procedure to the quaternion field.  

This paper aims at the intrinsic difficulty coming from the non-convexity of the following two programs: the real-valued $(HQP_{3})$ and the complex-valued $(HQP_{4})$. We shall establish a uniform sufficient and necessary test condition to detect whether or not the SDP relaxation of the above two programs is tight. All the test conditions given in the papers \cite{AiZhang2009,Cheng2021,Nguyen2019} can be regarded as special cases of the new test condition. Furthermore, by using the new test condition, we shall generalize S-lemma and Yuan's lemma to three real and four complex quadratic forms first under certain exact conditions, which improves Proposition 3.6 of \cite{Telarky2007} and Theorem 3.9 of \cite{Peng1997}. It displays also that the new test condition has potential wide application values.


%

This paper is organized as follows. In the section 2, we review some rank-one decomposition theorems over the real and complex number fields.  In the section 3, the sufficient and necessary test condition is derived. In the section 4, S-lemma and Yuan's lemma are generalized to three real and four complex quadratic forms under certain exact conditions. In the section 5, numerical experiments are presented to show the numerical effectiveness of the test condition.

\noindent\textbf{Notation.} Let $\mathcal{F}$ denote the real number field $\mathcal{R}$ or the complex number field $\mathcal{C}$; And correspondingly,
$\mathcal{F}^n$ denotes the $n$-dimensional real vector space $\mathcal{R}^n$ or complex vector space $\mathcal{C}^n$. For any vector $x\in\mathcal{F}^n$, its transpose and conjugate transpose are described by $x^T$ and $x^H$, respectively. Moreover, $\mathcal{HF}^n$ (or $\mathcal{HF}_+^{n}$) denotes the set of all the $n\times n$ Hermitian matrices ( or all the $n\times n$ positive semidefinite Hermitian matrices) over the number field $\mathcal{F}$. 
For any matrix $A\in\mathcal{HF}^{n}$, the notations $\rank(A)$, $\range(A)$ and $\Null(A)$ denote the rank,  range subspace and null subspace of  $A$, respectively; and $A\succeq0$ (or $\succ0$) means that the matrix $A$ is positive semidefinite (or positive definite). For any two matrices $A,B\in\mathcal{HF}^{n}$, their inner product is denoted by  $A\bullet B=\tr(B^HA)$, where `$\tr(B^HA)$' denotes the trace of the matrix $B^HA$. Finally, we use $v^*(HQP_m)$ to denote the optimal objective value of $(HQP_{m})$.

\section{A review on matrix rank-one decomposition}
Firstly, let us review some classical matrix rank-one decomposition theorems in real and complex number fields. Here we manage to describe them uniformly for both real and complex fields, and to see them in a new perspective. Put
\begin{equation}\label{mf-define}
m_\mathcal{F}=
\begin{cases}
	1, & \text{if }\mathcal{F}=\mathcal{R},\\
	2, & \text{if }\mathcal{F}=\mathcal{C}.
\end{cases}
\end{equation}
The following lemma combines Corollary 4 of \cite{Sturm2003On}  and Theorem 2.1 of \cite{Huang2007}. 
 
\begin{lemma}[Corollary 4 of \cite{Sturm2003On}  and Theorem 2.1 of \cite{Huang2007}]\label{lemma:rank-one}
Assume $A_1,A_{m_{\mathcal{F}}}\in\mathcal{HF}^n$ and $0\neq X\in\mathcal{HF}_+^n$ with  $\rank(X)=r$. Then, in polynomial-time, one can find a rank-one decomposition for $X$:
	$$
	X=x_1x_1^H+x_2x_2^H+\cdots+x_rx_r^H
	$$
	such that  
\begin{equation}\label{eq:AX=0,mf}
   \left(A_1\bullet rx_kx_k^H,\, A_{m_{\mathcal{F}}}\bullet rx_kx_k^H\right)
	= \left( A_1\bullet X,\,A_{m_{\mathcal{F}}}\bullet X \right) \quad \mbox{ for all }k=1,2,\cdots,r.
\end{equation}
\end{lemma}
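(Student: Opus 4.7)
My plan is to reduce the claim to a statement about unitary matrices and then apply Jacobi-type column-pair rotations, with one extra ingredient in the complex case. First, I would fix a square-root factorization $X = YY^H$ with $Y \in \mathcal{F}^{n \times r}$ of full column rank (for example from the spectral decomposition of $X$). Every rank-one decomposition $X = \sum_{k=1}^r x_k x_k^H$ with exactly $r$ summands then has the form $x_k = Y q_k$, where $Q = [q_1, \ldots, q_r] \in \mathcal{F}^{r \times r}$ is unitary: indeed, $Y Q Q^H Y^H = YY^H$ together with full column rank of $Y$ forces $QQ^H = I_r$. Setting $B_i := Y^H A_i Y$ for $i \in \{1, m_\mathcal{F}\}$, the target identity $A_i \bullet (r x_k x_k^H) = A_i \bullet X$ becomes $r\, q_k^H B_i q_k = \tr(B_i)$, so the task is to build a unitary $Q$ for which every diagonal entry of $Q^H B_i Q$ equals the mean $\tr(B_i)/r$, simultaneously for $i = 1$ in the real case and for $i = 1, 2$ in the complex case.

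Next I would run a Jacobi-type outer loop starting from $Q = I$. In the real case, whenever some diagonal $(k,k)$ of $Q^H B_1 Q$ exceeds the mean, another index $\ell$ must lie below it; acting on $(q_k, q_\ell)$ by a one-parameter family of Givens rotations preserves the sum of the two corresponding diagonal entries and, since rotation by $\pi/2$ swaps those entries, the intermediate value theorem produces an angle making the new $(k,k)$ entry exactly $\tr(B_1)/r$. After at most $r - 1$ such rotations every diagonal equals the mean, and because each rotation is given in closed form the procedure is polynomial-time. This is the Sturm--Zhang step.

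The same outer loop works in the complex case, provided each inner step simultaneously sets $q_k^H B_1 q_k$ and $q_k^H B_2 q_k$ to their respective means. Here I would invoke the Toeplitz--Hausdorff theorem: for any two Hermitian matrices $\tilde B_1, \tilde B_2 \in \mathcal{HC}^2$, the joint numerical range $\{(v^H \tilde B_1 v,\, v^H \tilde B_2 v) : v \in \mathcal{C}^2,\ v^H v = 1\}$ is a convex subset of $\mathcal{R}^2$. Applied to the restrictions of $B_i$ to the plane $\mathrm{span}\{q_k, q_\ell\}$, this convexity guarantees that the pair $(\tr(B_1)/r,\ \tr(B_2)/r)$, which sits on the segment joining the two diagonal points of the range, is attained by some unit vector; that vector becomes the new $q_k$, its orthogonal complement in the plane becomes the new $q_\ell$, and the outer loop continues. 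The realizing vector can be found constructively by parametrizing the $U(2)$ action on the plane and solving two real quadratic equations, so the whole algorithm stays polynomial-time.

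The main obstacle is precisely this complex inner step: the one-parameter IVT argument of the real case cannot simultaneously satisfy two equations, and one must exploit the extra real dimensions of $U(2)$ (four versus the one of $SO(2)$) together with the convexity of the joint numerical range. This is also why the lemma tops out at one real or two complex matrices: for three Hermitian forms of size $2 \times 2$ the joint numerical range need not be convex, the inner equalization can fail, and the induction collapses. The $3/4$ asymmetry highlighted in the paper's main results is driven by exactly this obstruction.
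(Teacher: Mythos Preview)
The paper does not prove this lemma itself; it is quoted from Sturm--Zhang and Huang--Zhang without argument. Your reduction---factor $X = YY^H$, set $B_i = Y^H A_i Y$, and look for a unitary $Q$ equalizing the diagonals of $Q^H B_i Q$---is the right framework and matches those sources, and your real case (Givens rotations plus the intermediate value theorem, one column at a time) is correct and is essentially the Sturm--Zhang argument.

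The complex inner step has a genuine gap. You claim the target $M = (\tr(B_1)/r,\,\tr(B_2)/r)$ ``sits on the segment joining the two diagonal points'' $P_k, P_\ell$, but no choice of $k,\ell$ guarantees this. Take $r=3$, $B_1 = \mathrm{diag}(1,-1,0)$, $B_2 = \mathrm{diag}(0,1,-1)$: the diagonal points $(1,0),(-1,1),(0,-1)$ have centroid $M=(0,0)$ strictly inside their triangle, on no edge; and since every off-diagonal vanishes, each $2\times 2$ joint numerical range \emph{equals} the corresponding edge, so Toeplitz--Hausdorff in $\mathcal{C}^2$ still misses $M$. Convexity of the two-dimensional range is not the issue; the issue is that $M$ need not lie in that range at all. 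The fix---consistent with your own remark about the extra $U(2)$ degrees of freedom, and essentially how Huang--Zhang proceed---is a two-pass scheme: first equalize the $B_1$-diagonal by real rotations exactly as in your real case; then equalize $B_2$ by pairing an above-mean and a below-mean index (with respect to $B_2$) and rotating $q_k \mapsto \cos\theta\, q_k + \omega\sin\theta\, q_\ell$, $q_\ell \mapsto -\bar\omega\sin\theta\, q_k + \cos\theta\, q_\ell$, with the unit phase $\omega$ chosen so that $\re(\bar\omega\, q_k^H B_1 q_\ell)=0$. That choice freezes the already-equal $B_1$ diagonals for every $\theta$ and leaves the single real parameter $\theta$ to hit the $B_2$ mean by the intermediate value theorem. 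Your proposal needs this phase-selection step in place of the unsupported segment claim.
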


Note that, if $\mathcal{F}=\mathcal{R}$, $A_{m_{\mathcal{F}}}$ is just $A_1$, that is  \eqref{eq:AX=0,mf} contains only one equation; only if  $\mathcal{F}=\mathcal{C}$,  \eqref{eq:AX=0,mf} contains two equations indeed.

If one more matrix, say $A_{m_{\mathcal{F}}+1}$, is considered, the above perfect rank-one decomposition result appears no longer, but a rank-one solution may exist in certain conditions. The following result is essentially contributed by Lemma 3.3 of \cite{AiZhang2009} and Theorem 2.2 of \cite{AiZhang2011}, but is described in a more refined form.

\begin{lemma}\label{theorem:rank-one+1}
Assume $A_1,A_{m_{\mathcal{F}}},A_{m_{\mathcal{F}}+1}\in\mathcal{HF}^n$ and $0\neq X\in\mathcal{HF}_+^n$. Let $\mathcal{V}\subseteq\mathcal{F}^{n}$ be a linear subspace  satisfying $\range(X)\subseteq\mathcal{V}$ and $\dim(\mathcal{V})\geq3$.
\begin{description}
	\item[(i)] If $(A_1\bullet X,\,A_{m_{\mathcal{F}}}\bullet X,\,A_{m_{\mathcal{F}}+1}\bullet X)\neq(0,0,0)$, then one can find in polynomial-time a nonzero vector $ x\in\range(X)$ such that 
	\begin{equation}\label{neq:Ax=0,mf+1}
		\left(A_1\bullet xx^H,\,A_{m_{\mathcal{F}}}\bullet xx^H,\,A_{m_{\mathcal{F}}+1}\bullet xx^H \right)= 
		\left(A_1\bullet X,\,A_{m_{\mathcal{F}}}\bullet X,\,A_{m_{\mathcal{F}}+1}\bullet X \right).
	\end{equation}
	
	\item[(ii)] If $(A_1\bullet X,\,A_{m_{\mathcal{F}}}\bullet X,\,A_{m_{\mathcal{F}}+1}\bullet X)=(0,0,0)$, then one can find in polynomial-time a nonzero vector $x\in\mathcal{V}$ such that \eqref{neq:Ax=0,mf+1} holds.
\end{description} 
\end{lemma}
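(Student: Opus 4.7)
The plan is to handle both parts by a common two-step scheme: first apply Lemma~\ref{lemma:rank-one} to $X$ to obtain a rank-one decomposition that already preserves the first $m_{\mathcal F}$ inner products, then correct the remaining inner product either by selecting a single piece or by ``merging'' two pieces in a 2-dim sub-span.

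For Part~(i), apply Lemma~\ref{lemma:rank-one} to $X$ with $A_1$ (real) or $A_1,A_{m_{\mathcal F}}$ (complex) to obtain $X=\sum_{k=1}^{r}x_kx_k^H$ with $x_k\in\range(X)$ and $A_j\bullet rx_kx_k^H=A_j\bullet X$ for the corresponding $j$'s. Let $c_k:=A_{m_{\mathcal F}+1}\bullet rx_kx_k^H$; these average to $A_{m_{\mathcal F}+1}\bullet X$. If some $c_{k_0}$ equals $A_{m_{\mathcal F}+1}\bullet X$, then $x=\sqrt{r}\,x_{k_0}$ is the desired vector. Otherwise pick indices $k^+,k^-$ with $c_{k^+}>A_{m_{\mathcal F}+1}\bullet X>c_{k^-}$ and search in the 2-dim $\mathcal F$-span of $x_{k^+},x_{k^-}$ for $x=\alpha x_{k^+}+\beta x_{k^-}$ satisfying the three equations $A_j\bullet xx^H=A_j\bullet X$. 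Existence of such a rank-one $x$ follows from convexity of the joint numerical range (Dines in the real case, Au-Yeung--Tsing in the complex case), since the target triple lies in the convex hull of the two rank-one images attached to $rx_{k^\pm}x_{k^\pm}^H$, which already agree with the target on the first $m_{\mathcal F}$ coordinates and strictly bracket it on the last. For a polynomial-time construction, I would solve the resulting system of $m_{\mathcal F}+1$ real quadratic equations in the $2m_{\mathcal F}$ real coordinates of $(\alpha,\beta)$ explicitly, using the ``pure'' endpoints $(\sqrt{r},0)$ and $(0,\sqrt{r})$---both of which automatically satisfy the first $m_{\mathcal F}$ equations---as endpoints of a one-parameter sweep to which the intermediate value theorem applies on the remaining equation.

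For Part~(ii), the target triple is $(0,0,0)$. If $\rank(X)=1$, the generator of $\range(X)$ itself satisfies~\eqref{neq:Ax=0,mf+1}. If $\rank(X)\geq 2$, the same scheme as in~(i) applies: either some piece $x_k$ is already zero-valued for $A_{m_{\mathcal F}+1}$, or pieces of opposite signs exist and the 2-dim merging argument returns $x\in\range(X)\subseteq\mathcal V$ with all three forms vanishing. The obstructed subcase is $\rank(X)=2$ in the complex setting, where the joint numerical range of three Hermitian forms on $\mathcal C^2$ need not be convex; this is precisely where the hypothesis $\dim(\mathcal V)\geq 3$ becomes essential, since enlarging the span by any vector $y\in\mathcal V\setminus\range(X)$ restores convexity on the resulting 3-dim sub-span (Au-Yeung--Tsing) and permits a rank-one realization of the zero witness.

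The principal obstacle I anticipate is the constructive step in the 2-dim merging of Part~(i): while existence follows abstractly from convexity of the joint numerical range, producing $x$ in polynomial time requires solving a small quadratic system and checking that the level set of the first $m_{\mathcal F}$ constraints inside the 2-dim span is connected and contains both pure endpoints so the intermediate value argument applies. In the complex case this should use the phase freedom in $(\alpha,\beta)$; in the real case the signs of $\alpha,\beta$ must be tracked carefully to avoid landing on the ``wrong'' branch of the defining conic.
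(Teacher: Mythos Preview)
The paper does not prove this lemma itself; it is quoted from Lemma~3.3 of \cite{AiZhang2009} and Theorem~2.2 of \cite{AiZhang2011}. Your two-step scheme is in the right spirit, but two of its steps do not go through as written.

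For Part~(i) with $\mathcal{F}=\mathcal{C}$, the appeal to joint-numerical-range convexity on the two-dimensional span of $x_{k^+},x_{k^-}$ is unjustified: the joint range of three Hermitian forms on $\mathcal{C}^2$ is generically an ellipsoidal surface, not convex. The clean route (used in the cited references, and visible in the paper's own proof of Lemma~\ref{lemma:Mu=zero}) is to normalize \emph{before} invoking Lemma~\ref{lemma:rank-one}: choose $\ell$ with $a_\ell:=A_\ell\bullet X\neq 0$, apply Lemma~\ref{lemma:rank-one} with the $m_{\mathcal{F}}$ matrices $A_j-(a_j/a_\ell)A_\ell$ ($j\neq\ell$), then select a single piece $x_k$ with $A_\ell\bullet x_kx_k^H/a_\ell>0$ (one exists since these ratios sum to~$1$) and rescale --- no merging is needed. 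More seriously, in Part~(ii) the two-dimensional merging can fail to yield a \emph{nonzero} $x$, and not only in the complex rank-$2$ subcase you isolate. Already over $\mathcal{R}$: if the two bracketing pieces are $e_1,e_2$ and the restricted forms are $A_1|_W=\bigl(\begin{smallmatrix}0&1\\1&0\end{smallmatrix}\bigr)$, $A_2|_W=\bigl(\begin{smallmatrix}1&0\\0&-1\end{smallmatrix}\bigr)$, then both pieces have $A_1$-value $0$ and $A_2$-values $\pm1$, yet the only common isotropic vector in $W$ is $0$; your IVT sweep along the level set $\{A_1=0\}$ is forced through the origin. The passage to a three-dimensional subspace that you propose is the right remedy, but it must be invoked in the real case as well, and in the complex case whenever the two-dimensional merge degenerates --- not only when $\rank(X)=2$.
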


Finally, if another more matrix, say $A_{m_{\mathcal{F}}+2}$, is considered also, a rank-one solution may still exist under some stricter conditions.
The following lemma is essentially contributed by Theorem 2.3 of \cite{AiZhang2011}. 

\begin{lemma}\label{theorem:rank-one+2}
Assume $A_1,A_{m_{\mathcal{F}}},A_{m_{\mathcal{F}}+1},A_{m_{\mathcal{F}}+2}\in\mathcal{HF}^n$ and $0\neq X\in\mathcal{HF}_+^n$. Let $\mathcal{V}\subseteq\mathcal{F}^{n}$ be a linear subspace  that satisfies $\range(X)\subseteq\mathcal{V}$, $\,\dim(\mathcal{V})\geq3$ and  
$$
(A_1\bullet Y,A_{m_{\mathcal{F}}}\bullet Y,A_{m_{\mathcal{F}}+1}\bullet Y,A_{m_{\mathcal{F}}+2}\bullet Y)\neq(0,0,0,0),\quad
\forall\, 0\ne Y\in\mathcal{HF}_+^n \mbox{ and } \range(Y)\subseteq\mathcal{V}.
$$
Then one can find in polynomial-time a nonzero vector $x\in\mathcal{V}$ such that  
\begin{equation}
(A_1\bullet xx^H,A_{m_{\mathcal{F}}}\bullet xx^H, A_{m_{\mathcal{F}}+1}\bullet xx^H, A_{m_{\mathcal{F}}+2}\bullet xx^H)
	=(A_1\bullet X,A_{m_{\mathcal{F}}}\bullet X, A_{m_{\mathcal{F}}+1}\bullet X, A_{m_{\mathcal{F}}+2}\bullet X).	
\end{equation}
\end{lemma}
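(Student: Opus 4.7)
The plan is to reduce the problem by a Pataki-style rank reduction on the feasible spectrahedron, then to handle the surviving rank-$2$ case by a one-parameter pencil that folds the fourth linear constraint into a new third matrix for Lemma~\ref{theorem:rank-one+1}.

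Introduce
\[
\mathcal{S} := \{ Y \in \mathcal{HF}_+^n : \range(Y) \subseteq \mathcal{V},\ A_j \bullet Y = A_j \bullet X,\ j \in \{1, m_\mathcal{F}, m_\mathcal{F}+1, m_\mathcal{F}+2\}\},
\]
which contains $X$. The non-degeneracy hypothesis forces the recession cone of $\mathcal{S}$ to equal $\{0\}$, since any nonzero $Z \in \mathcal{HF}_+^n$ with $\range(Z) \subseteq \mathcal{V}$ and $A_j \bullet Z = 0$ for all $j$ would directly contradict the hypothesis. Hence $\mathcal{S}$ is compact, and I pick an extreme point $X^*$. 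Pataki's rank bound then gives $\binom{r^*+1}{2} \leq 3$ in the real case and $(r^*)^2 \leq 4$ in the complex case, where $r^* := \rank(X^*)$, so $r^* \leq 2$ in both cases. If $r^* = 1$, writing $X^* = xx^H$ finishes the proof.

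Suppose $r^* = 2$. For $\mu \in \mathcal{R}$, define the pencil $B_\mu := A_{m_\mathcal{F}+1} - \mu A_{m_\mathcal{F}+2}$, and apply Lemma~\ref{theorem:rank-one+1} to $X^*$ with the triple $(A_1, A_{m_\mathcal{F}}, B_\mu)$ and subspace $\mathcal{V}$ to produce a rank-one $x_\mu \in \mathcal{V}$ matching those three inner products. Rewriting $B_\mu \bullet x_\mu x_\mu^H = B_\mu \bullet X$ as
\[
A_{m_\mathcal{F}+1} \bullet x_\mu x_\mu^H - A_{m_\mathcal{F}+1} \bullet X \;=\; \mu\,(A_{m_\mathcal{F}+2} \bullet x_\mu x_\mu^H - A_{m_\mathcal{F}+2} \bullet X),
\]
one sees that finding a $\mu^*$ with $A_{m_\mathcal{F}+2} \bullet x_{\mu^*} x_{\mu^*}^H = A_{m_\mathcal{F}+2} \bullet X$ automatically forces $A_{m_\mathcal{F}+1} \bullet x_{\mu^*} x_{\mu^*}^H = A_{m_\mathcal{F}+1} \bullet X$, so $x_{\mu^*}$ is the desired rank-one vector.

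The main obstacle is producing such a $\mu^*$: the family $\{x_\mu\}$ from Lemma~\ref{theorem:rank-one+1} is only set-valued and need not be continuous in $\mu$. My plan is to follow an algebraic branch of the explicit constructive procedure in Lemma~\ref{theorem:rank-one+1} (whose input matrices depend polynomially on $\mu$), and then apply an intermediate-value argument to the scalar gap $\varphi(\mu) := A_{m_\mathcal{F}+2} \bullet x_\mu x_\mu^H - A_{m_\mathcal{F}+2} \bullet X$: as $\mu \to \pm\infty$, $B_\mu$ is dominated by $-\mu A_{m_\mathcal{F}+2}$, which together with the non-degeneracy hypothesis (used to stay in case (i) of Lemma~\ref{theorem:rank-one+1} for generic $\mu$) should drive $\varphi$ to opposite signs and thereby force a zero somewhere on $\mathcal{R}$. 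Managing the exceptional values of $\mu$ where algebraic branches cross, or where case (ii) of Lemma~\ref{theorem:rank-one+1} is triggered, is where I expect the bulk of the technical bookkeeping to lie.
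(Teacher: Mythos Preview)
Your Pataki-style rank reduction in the first step is correct and delivers an extreme point $X^*$ of rank at most $2$. The gap is in your rank-$2$ plan.

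For every $\mu$ at which case~(i) of Lemma~\ref{theorem:rank-one+1} applies, the vector $x_\mu$ it returns lies in $\range(X^*)$. But the required rank-one solution need not live in that $2$-dimensional subspace at all --- this is precisely why the hypothesis $\dim(\mathcal V)\ge 3$ is present, and your rank-$2$ step uses it only through the ``exceptional'' case~(ii), which you propose to treat as bookkeeping. Here is a concrete instance where that cannot work. Take $\mathcal F=\mathcal R$, $\mathcal V=\mathcal R^3$, $X=X^*=e_1e_1^T+e_2e_2^T$, and
\[
A_1=e_1e_1^T-e_2e_2^T,\qquad A_2=e_1e_2^T+e_2e_1^T,\qquad A_3=I_3.
\]
One checks directly that $X^*$ is an extreme point of your set $\mathcal S$, and the non-degeneracy hypothesis holds because $A_3\bullet Y=\tr Y>0$ for every $0\ne Y\succeq 0$. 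The \emph{only} rank-one solution is $x=(0,0,\sqrt2)^T\notin\range(X^*)$. Solving your case-(i) conditions inside $\range(X^*)$ produces exactly two algebraic branches, on which one computes $\varphi(\mu)=2/(\mu-1)$ and $\varphi(\mu)=-2/(\mu+1)$ respectively; neither ever vanishes, and each branch is defined on a disconnected $\mu$-domain, so no continuous selection yields a zero via the intermediate value theorem. The only $\mu$ that succeeds is $\mu=0$, which lands in case~(ii) --- the very regime you deferred.

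The paper itself does not prove this lemma; it imports it from \cite{AiZhang2011}, Theorem~2.3. The construction there handles the surviving rank-$2$ case by explicitly bringing a vector from $\mathcal V\setminus\range(X^*)$ into the decomposition, rather than by varying a scalar pencil whose case-(i) outputs are trapped inside $\range(X^*)$. If you want to repair your approach, that is the missing ingredient: the one-parameter family must actually move into the extra dimension of $\mathcal V$.
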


\section{On the tightness of an SDP relaxation}
In this section, we consider the following typical Semi-Definite Programming (SDP) relaxation of $(HQP_m)$:
\begin{equation}
	\begin{array}{lll}
		(SP_m)&\mbox{minimize}   &A_0\bullet X\\
		&\mbox{subject to} &
		\begin{cases}
			A_1\bullet X\unlhd_1 c_1,\\
			A_{2}\bullet X\unlhd_{2} c_{2},\\
			\quad\quad\vdots\\
			A_{m}\bullet X\unlhd_{m} c_{m}\,\,(c_{m}\neq0),\\
			X\succeq0.
		\end{cases}
	\end{array}
\end{equation}
The dual problem of $(SP_m)$ can be written as follows:
\begin{equation}
	\begin{array}{lll}
		(SD_m)&\mbox{maximize}   &-\sum\limits_{i=1}^{m}c_i\mu_i\\
		&\mbox{subject to} & Z=A_0+\sum\limits_{i=1}^{m}\mu_iA_i\succeq0,\\
		&& \mu_i\in\mathcal{R} \text{ and } \mu_i\unrhd_i^*0,\,\,\,i=1,2,\cdots,m,\\
	\end{array}
\end{equation}
where 
$$\unrhd_i^*=
\begin{cases}
	\geq, & \text{if }\unlhd_i \text{ is } \leq;\\
	unrestricted, & \text{if }\unlhd_i \text{ is } =.
\end{cases}
$$

\begin{assumption}\label{ass1}
  	{\bf (i)} $(SP_m)$ satisfies the Slater condition.
  	{ \bf (ii)} $(SD_m)$ satisfies the Slater condition, that is, there exist $m$ real numbers  $ \tilde\mu_1,\tilde\mu_2,\cdots,\tilde\mu_{m}$ such that 
  	\begin{equation}\label{(SD0)-interior}
  		\tilde{Z}=A_0+\sum\limits_{i=1}^{m}\tilde\mu_iA_i\succ0,\,\quad \tilde\mu_i>0\,\,\forall  ``\unlhd_i"= ``\leq"\,(i=1,2,\cdots,m).
  	\end{equation}
\end{assumption}

According to the SDP theory, under Assumption \ref{ass1}, both $(SP_m)$ and $(SD_m)$ must have optimal solutions. A primal feasible solution $\hat{X}$ and a dual feasible solution $(\hat{Z},\hat{\mu}_1,\cdots,\hat{\mu}_m)$ are an optimal solution pair to $(SP_m)$ and $(SD_m)$, if and only if the pair  satisfies the following complementary condition:
\begin{equation}\label{complement-condition}
	\hat{Z}\bullet\hat{X}=0,\quad \hat\mu_i(A_i\bullet \hat{X}-c_i)=0\,\, (i=1,2,\cdots,m).
\end{equation}



The following lemma tells us that, under Assumption \ref{ass1}, if only $(HQP_m)$ has feasible solutions then  $(HQP_m)$ must have optimal solutions.

\begin{lemma}\label{QP-has-optimal-solution}
Let Assumption \ref{ass1} hold,  and let the feasible region $\Omega$ of $(HQP_m)$ be nonempty, say $x_0\in\Omega$. Then the level set $\Omega(x_0)=\{x\,|\, x^TA_0x\leq x_0^TA_0x_0,\,\,x\in\Omega \}$  must be a bounded closed set.
\end{lemma}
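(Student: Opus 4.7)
The plan is to exploit the dual Slater condition (Assumption~\ref{ass1}(ii)) to synthesize a positive definite matrix out of $A_0$ and the $A_i$'s, from which boundedness of $\Omega(x_0)$ follows by standard quadratic form estimates. Closedness is essentially automatic and I would dispose of it first.

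\textbf{Closedness.} The feasible region $\Omega$ is a finite intersection of sets of the form $\{x:x^HA_ix\le c_i\}$ or $\{x:x^HA_ix=c_i\}$, each of which is the preimage of a closed set under the continuous map $x\mapsto x^HA_ix$, hence closed. Intersecting $\Omega$ with the closed sublevel set $\{x:x^HA_0x\le x_0^HA_0x_0\}$ preserves closedness, so $\Omega(x_0)$ is closed.

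\textbf{Boundedness.} Let $\tilde\mu_1,\dots,\tilde\mu_m$ be as in \eqref{(SD0)-interior} and set $\tilde Z=A_0+\sum_{i=1}^m\tilde\mu_iA_i\succ0$. For any $x\in\Omega(x_0)$, I would compute
\[
x^H\tilde Z x \;=\; x^HA_0x+\sum_{i=1}^m \tilde\mu_i\, x^HA_ix.
\]
For every index $i$ with $\unlhd_i$ being ``$\le$'' the dual Slater condition gives $\tilde\mu_i>0$, and $x^HA_ix\le c_i$, so $\tilde\mu_i x^HA_ix\le\tilde\mu_ic_i$. For every index $i$ with $\unlhd_i$ being ``$=$'' we have $x^HA_ix=c_i$ exactly (regardless of the sign of $\tilde\mu_i$), so again $\tilde\mu_ix^HA_ix=\tilde\mu_ic_i$. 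Combining with $x^HA_0x\le x_0^HA_0x_0$ yields the uniform bound
\[
x^H\tilde Z x \;\le\; x_0^HA_0x_0 + \sum_{i=1}^m \tilde\mu_i c_i \;=:\; M,
\]
which is a constant independent of $x$. Since $\tilde Z\succ0$, denoting by $\lambda_{\min}(\tilde Z)>0$ its smallest eigenvalue I obtain $\lambda_{\min}(\tilde Z)\,\|x\|^2\le x^H\tilde Zx\le M$, hence $\|x\|^2\le M/\lambda_{\min}(\tilde Z)$ for all $x\in\Omega(x_0)$. This proves $\Omega(x_0)$ is bounded.

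\textbf{Main obstacle.} There is no real obstacle; the only subtlety is bookkeeping the two constraint types so that equality constraints contribute exactly and inequality constraints contribute with the right sign. The positivity of $\tilde\mu_i$ for inequality constraints (which is precisely what the dual Slater condition provides) is what rescues the argument — without it, the inequality $\tilde\mu_i x^HA_ix\le\tilde\mu_ic_i$ could fail. Once the uniform bound on $x^H\tilde Zx$ is established, positive definiteness of $\tilde Z$ finishes the job immediately, and combined with the closedness step above completes the proof.
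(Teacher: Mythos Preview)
Your proof is correct and follows essentially the same approach as the paper: use the dual Slater point $\tilde Z\succ0$ from Assumption~\ref{ass1}(ii), bound $x^H\tilde Zx$ above by $x_0^HA_0x_0+\sum_i\tilde\mu_ic_i$ for every $x\in\Omega(x_0)$, and conclude boundedness via the smallest eigenvalue of $\tilde Z$. The paper's version is simply terser, omitting your explicit case split between equality and inequality constraints.
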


\begin{proof}
It is straightforward that $\Omega(x_0)$ is a closed set. So one needs only to prove that $\Omega(x_0)$ is bounded. 
In fact, Assumption \ref{ass1} (ii) means that \eqref{(SD0)-interior} holds.
Let $\lambda_0 >0$ be the smallest eigenvalue of the matrix $\tilde{Z}$ defined in \eqref{(SD0)-interior}.
Therefore, for any $x\in\Omega(x_0)$, we have
\begin{equation*}
  \lambda_0\|x\|^2\leq x^H\tilde{Z}x=x^HA_0x+\sum\limits_{i=1}^{m}\tilde\mu_ix^HA_ix\leq x_0^HA_0x_0+\sum\limits_{i=1}^{m}\tilde\mu_ic_i.
\end{equation*}
The proof is completed.
\end{proof}


From now on, we consider two special cases of $(HQP_{m})$:   $m=3$ for $\mathcal{F}=\mathcal{R}$ and $m=4$ for $\mathcal{F}=\mathcal{C}$. In order to analyze the two cases uniformly, we manage to formulate them  by a unified form as follows:
\begin{equation*}
	\begin{array}{lll}
		(HQP_{m_{\mathcal{F}}+2})&\mbox{minimize}   &x^HA_0x\\
		&\mbox{subject to} &
		\begin{cases}
			x^HA_1x\unlhd_1 c_1,\\
			x^HA_{m_{\mathcal{F}}}x\unlhd_{m_{\mathcal{F}}} c_{m_{\mathcal{F}}}\,\,(\text{only for } \mathcal{F}=\mathcal{C}),\\
			x^HA_{m_{\mathcal{F}}+1}x\unlhd_{m_{\mathcal{F}}+1} c_{m_{\mathcal{F}}+1},\\
			x^HA_{m_{\mathcal{F}}+2}x\unlhd_{m_{\mathcal{F}}+2} c_{m_{\mathcal{F}}+2}\,(c_{m_{\mathcal{F}}+2}\neq0),
		\end{cases}
	\end{array}
\end{equation*}
where the number $m_{\mathcal{F}}$ is defined by \eqref{mf-define}, i.e. $m_{\mathcal{F}}=1$ for $\mathcal{F}=\mathcal{R}$ and $m_{\mathcal{F}}=2$ for $\mathcal{F}=\mathcal{C}$.   
When $\mathcal{F}=\mathcal{R}$, the ``so-called" second constraint ``$x^HA_{m_{\mathcal{F}}}x\unlhd_{m_{\mathcal{F}}} c_{m_{\mathcal{F}}}$" is just the first constraint ``$x^HA_1x\unlhd_1 c_1$", that is, at this moment  $(HQP_{m_{\mathcal{F}}+2})$ has only three constraints. 



The following lemma plays an important role in the remaining discussions,
which shows that, under Assumption \ref{ass1}, the matrices $A_1,A_{m_{\mathcal{F}}},A_{m_{\mathcal{F}}+1},A_{m_{\mathcal{F}}+2}$ are  jointly definite in the null subspace of $\hat{Z}$.

\begin{lemma}\label{lemma:local-joint-definite}
Let Assumption \ref{ass1} hold and $(\hat{Z},\hat{\mu}_1,\hat{\mu}_{m_{\mathcal{F}}},\hat{\mu}_{m_{\mathcal{F}}+1},\hat{\mu}_{m_{\mathcal{F}}+2})$ be an optimal solution to $(SD_{m_{\mathcal{F}}+2})$. Then 
$$
(A_1\bullet Y, A_{m_{\mathcal{F}}}\bullet Y, A_{m_{\mathcal{F}}+1}\bullet Y, A_{m_{\mathcal{F}}+2}\bullet Y)\neq(0,0,0,0),\quad \forall\, 0\neq Y\succeq0 \text{ and } \hat{Z}\bullet Y=0.
$$
\end{lemma}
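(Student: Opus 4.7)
My plan is to argue by contradiction: suppose there exists $0 \neq Y \succeq 0$ with $\hat Z \bullet Y = 0$ and $A_1 \bullet Y = A_{m_{\mathcal{F}}} \bullet Y = A_{m_{\mathcal{F}}+1} \bullet Y = A_{m_{\mathcal{F}}+2} \bullet Y = 0$, and derive a contradiction using the dual Slater condition of Assumption \ref{ass1}(ii).

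The first observation is that the assumed vanishing conditions force $A_0 \bullet Y = 0$ as well. Indeed, from the dual feasibility identity $\hat{Z} = A_0 + \hat\mu_1 A_1 + \hat\mu_{m_{\mathcal{F}}} A_{m_{\mathcal{F}}} + \hat\mu_{m_{\mathcal{F}}+1} A_{m_{\mathcal{F}}+1} + \hat\mu_{m_{\mathcal{F}}+2} A_{m_{\mathcal{F}}+2}$, I would take the inner product with $Y$ to obtain
\begin{equation*}
0 \;=\; \hat{Z}\bullet Y \;=\; A_0\bullet Y + \sum_{i\in\{1,m_{\mathcal{F}},m_{\mathcal{F}}+1,m_{\mathcal{F}}+2\}}\hat\mu_i\,(A_i\bullet Y) \;=\; A_0\bullet Y.
\end{equation*}

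Next I would invoke Assumption \ref{ass1}(ii) to get real numbers $\tilde\mu_1,\tilde\mu_{m_{\mathcal{F}}},\tilde\mu_{m_{\mathcal{F}}+1},\tilde\mu_{m_{\mathcal{F}}+2}$ such that $\tilde Z = A_0+\sum\tilde\mu_i A_i \succ 0$. Pairing this matrix against $Y$ and using both $A_0\bullet Y = 0$ and $A_i\bullet Y = 0$ for every $i$, every term on the right-hand side of
\begin{equation*}
\tilde Z\bullet Y \;=\; A_0\bullet Y + \sum_{i\in\{1,m_{\mathcal{F}},m_{\mathcal{F}}+1,m_{\mathcal{F}}+2\}}\tilde\mu_i\,(A_i\bullet Y)
\end{equation*}
vanishes, so $\tilde Z\bullet Y = 0$. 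But $\tilde Z\succ 0$ together with $0\neq Y\succeq 0$ forces $\tilde Z\bullet Y > 0$, a contradiction. This closes the proof.

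There is really no serious obstacle here; the lemma is essentially a direct consequence of dual Slater and the optimality identity $\hat Z = A_0+\sum\hat\mu_i A_i$. The only point that needs a moment of care is to notice that the sign conventions of the $\hat\mu_i$ and $\tilde\mu_i$ (positive for inequality constraints, unrestricted for equality constraints) are irrelevant to the argument, because all the $A_i\bullet Y$ terms involved in both computations are assumed to be zero — only the strict positive-definiteness of $\tilde Z$ matters at the end.
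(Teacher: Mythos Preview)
Your proof is correct and is essentially identical to the paper's own argument: both proceed by contradiction, first deduce $A_0\bullet Y=0$ from $\hat Z=A_0+\sum_i\hat\mu_iA_i$ and $\hat Z\bullet Y=0$, then pair the strictly feasible $\tilde Z=A_0+\sum_i\tilde\mu_iA_i\succ0$ against $Y$ to obtain $\tilde Z\bullet Y=0$, contradicting $\tilde Z\succ0$ and $0\neq Y\succeq0$. Your closing remark about the irrelevance of the signs of the multipliers is a nice clarification not made explicit in the paper.
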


\begin{proof}
Suppose by contradiction that
\begin{equation*}\label{A1m=0}
	A_i\bullet Y=0,\,\,\,i=1,m_{\mathcal{F}},m_{\mathcal{F}}+1,m_{\mathcal{F}}+2.
\end{equation*}
It implies that
$$
A_0\bullet Y=\hat{Z}\bullet Y- \sum\limits_{i=1}^{m_{\mathcal{F}}+2}\hat\mu_iA_i \bullet Y =0-0=0.
$$ 
On the other hand, Assumption \ref{ass1}(ii) means that \eqref{(SD0)-interior} holds. So one obtains that 
$$
\tilde{Z}\bullet Y= A_0\bullet Y+\sum\limits_{i=1}^{m_{\mathcal{F}}+2}\tilde\mu_iA_i\bullet Y=0+0=0,
$$ 
which contradicts with 
$\tilde{Z}\succ0 \text{ and } 0\neq Y\succeq0$.
The proof is completed.
\end{proof}


The following lemma shows that, if any optimal multiplier that  corresponds to an inequality constraint of $(SP_{m_{\mathcal{F}}+2})$ is equal to zero,  then $(SP_{m_{\mathcal{F}}+2})$ must be tight.

\begin{lemma}\label{lemma:Mu=zero}
Let Assumption \ref{ass1} hold, and let $\hat{X}$ and $(\hat{Z},\hat{\mu}_1,\hat{\mu}_{m_{\mathcal{F}}},\hat{\mu}_{m_{\mathcal{F}}+1},\hat{\mu}_{m_{\mathcal{F}}+2})$ be an optimal solution pair to  $(SP_{m_{\mathcal{F}}+2})$ and $(SD_{m_{\mathcal{F}}+2})$.
If there is one multiplier  $\hat{\mu}_{i_0}$ such that $\hat{\mu}_{i_0}=0$ and $``\unlhd_{i_0}"=``\leq"$, then one can find in polynomial-time an optimal solution with a rank less than $2$ to $(SP_{m_{\mathcal{F}}+2})$. 
\end{lemma}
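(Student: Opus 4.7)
The plan is to extract a rank-at-most-one optimum $yy^H$ of $(SP_{m_{\mathcal{F}}+2})$ by combining the rank-one decomposition lemmas of Section~2 with the observation that $\hat{\mu}_{i_0}=0$ effectively removes the $i_0$-th constraint from the duality pairing. A rank-one $yy^H$ with $y\in\Null(\hat Z)$ and $A_j\bullet yy^H=A_j\bullet \hat X$ for every $j\ne i_0$ automatically achieves the optimal objective, because the identity $A_0\bullet yy^H=\hat Z\bullet yy^H-\sum_{j}\hat{\mu}_j(A_j\bullet yy^H)$ reduces via $y\in\Null(\hat Z)$ and $\hat{\mu}_{i_0}=0$ to $-\sum_{j\ne i_0}\hat{\mu}_j(A_j\bullet\hat X)=A_0\bullet\hat X$, and the constraints $A_j\bullet yy^H\unlhd_j c_j$ for $j\ne i_0$ follow from value preservation; it therefore suffices to produce such a $y$ that also satisfies $A_{i_0}\bullet yy^H\le c_{i_0}$.

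I would then split on the dimension of $\Null(\hat Z)$. When $\dim\Null(\hat Z)\ge 3$, Lemma~\ref{theorem:rank-one+2} applies with $\mathcal V=\Null(\hat Z)$, the joint non-degeneracy being supplied by Lemma~\ref{lemma:local-joint-definite}; the resulting $y$ preserves all four values $A_j\bullet yy^H=A_j\bullet\hat X$ including $j=i_0$, so $A_{i_0}\bullet yy^H=A_{i_0}\bullet\hat X\le c_{i_0}$ is automatic (and this step does not even invoke $\hat{\mu}_{i_0}=0$). When $\dim\Null(\hat Z)\le 1$, the containment $\range(\hat X)\subseteq\Null(\hat Z)$ forces $\rank\hat X\le 1$, and $\hat X$ itself is already the desired solution. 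The nontrivial case is therefore $\dim\Null(\hat Z)=2$ with $\rank\hat X=2$, and it is here that the hypothesis $\hat{\mu}_{i_0}=0$ becomes essential.

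The main obstacle is this two-dimensional case, where I would work inside $\mathcal V=\Null(\hat Z)$ and seek $y\in\mathcal V$ with $y^HA_jy=A_j\bullet\hat X$ for $j\ne i_0$ and $y^HA_{i_0}y\le c_{i_0}$. Concretely, I would apply Lemma~\ref{theorem:rank-one+1} to $\hat X$ with the labeled matrices drawn from $\{A_j:j\ne i_0\}$ (using the real-case identification $A_1=A_{m_{\mathcal{F}}}$ to fill the duplicate slot), which preserves the $m_{\mathcal{F}}+1$ non-$i_0$ inner-product values; then, to enforce the one remaining inequality, I would refine the construction by a Sturm-Zhang / Huang-Zhang decomposition (Lemma~\ref{lemma:rank-one}) inside $\mathcal V$ and exploit the averaging relation $\sum_k A_{i_0}\bullet(rx_kx_k^H)=A_{i_0}\bullet\hat X\le c_{i_0}$ to pick a rank-one component still respecting the $i_0$-th feasibility. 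Showing that such a selection can always be made in polynomial time while simultaneously preserving all $m_{\mathcal{F}}+1$ non-$i_0$ values is the delicate part of the proof and is precisely what the reduction afforded by $\hat{\mu}_{i_0}=0$ is designed to enable.
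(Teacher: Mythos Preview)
Your dimension split handles the easy cases correctly, but the two-dimensional case has a genuine gap, and this is exactly where the lemma's content lies.

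In the case $\dim\Null(\hat Z)=2$, your plan is to first apply Lemma~\ref{theorem:rank-one+1} to pin down the $m_{\mathcal F}+1$ non-$i_0$ values and then ``refine'' via Lemma~\ref{lemma:rank-one} to catch the $i_0$-th inequality. This cannot work as written: Lemma~\ref{theorem:rank-one+1} returns a \emph{single} vector $y$, leaving no residual freedom to adjust $A_{i_0}\bullet yy^H$; and Lemma~\ref{lemma:rank-one} by itself preserves only $m_{\mathcal F}$ scalar values per component, one short of what you need. Your averaging remark, $\sum_k A_{i_0}\bullet x_kx_k^H=A_{i_0}\bullet\hat X$, does select a component with small $i_0$-value, but that component will in general \emph{not} match the remaining non-$i_0$ value (the $(m_{\mathcal F}{+}1)$-st one). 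You never explain how to reconcile these two requirements simultaneously, and you also never invoke Assumption~\ref{ass1}(ii), which turns out to be indispensable here.

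The paper's proof avoids your dimension split entirely. Fixing (WLOG) $i_0=m_{\mathcal F}+2$ and writing $\delta_i=A_i\bullet\hat X$, it picks a pivot index $p$ with $\delta_p\ne 0$ and applies Lemma~\ref{lemma:rank-one} to the $m_{\mathcal F}$ \emph{difference} matrices $A_j-(\delta_j/\delta_p)A_p$ (for $j\ne i_0,p$), each of which has zero trace against $\hat X$. The resulting decomposition then has, for every component $\hat x_k$, all non-$i_0$ values equal to $t_k\delta_j$ with a \emph{common} scalar $t_k=A_p\bullet\hat x_k\hat x_k^H/\delta_p$. Averaging on the remaining difference $A_{i_0}-(\delta_{i_0}/\delta_p)A_p$ yields some $k_0$ with $A_{i_0}\bullet\hat x_{k_0}\hat x_{k_0}^H\le t_{k_0}\delta_{i_0}$. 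The crucial step you are missing is then to show $t_{k_0}>0$: this uses the strictly feasible dual point $\tilde Z\succ 0$ from Assumption~\ref{ass1}(ii) together with $\hat\mu_{i_0}=0$ and $\tilde\mu_{i_0}>0$, by sandwiching $\tilde Z\bullet(\hat x_{k_0}\hat x_{k_0}^H-t_{k_0}\hat X)$ between $0$ and $0$ if $t_{k_0}\le 0$. Once $t_{k_0}>0$, the scaled rank-one matrix $\hat x_{k_0}\hat x_{k_0}^H/t_{k_0}$ meets all constraints. The degenerate case $\delta_j=0$ for all $j\ne i_0$ (your Lemma~\ref{theorem:rank-one+1} would not apply there either) is handled separately by showing, again via the dual Slater point, that $A_{i_0}\bullet\hat X\ge 0$, so that $x=0$ works.
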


\begin{proof}
Without loss of generality, we assume $i_0={m_{\mathcal{F}}+2}$, which means
\begin{equation}\label{eq0}
\hat{\mu}_{m_{\mathcal{F}}+2}=0,\,``\unlhd_{m_{\mathcal{F}}+2}"=``\leq", \text{ and }
\tilde{\mu}_{m_{\mathcal{F}}+2}>0, 
\end{equation} 
where $\tilde{\mu}_{m_{\mathcal{F}}+2}$ is defined by \eqref{(SD0)-interior}. 
To complete the proof, one needs only to find an $n$-dimensional vector $x$ satisfying
\begin{equation}\label{eq:MuZeroOptim}
x\in\Null(\hat{Z});\, A_i\bullet xx^H=A_i\bullet\hat{X}\,\text{ for }\,i=1,m_{\mathcal{F}},m_{\mathcal{F}}+1; \text{ and } A_{m_{\mathcal{F}}+2}\bullet xx^H\leq A_{m_{\mathcal{F}}+2}\bullet \hat{X};
\end{equation} 
then $xx^H$ must be an optimal solution to $(SP_{m_{\mathcal{F}}+2})$. So our proof proceeds to two cases as follows.

\noindent\textbf{Case 1.} {\bf $(A_1\bullet \hat{X},A_{m_{\mathcal{F}}}\bullet \hat{X}, A_{m_{\mathcal{F}}+1}\bullet \hat{X})=(0,0,0).$}\\
Notice that 
$$
A_0\bullet \hat{X}=\hat{Z}\bullet \hat{X}-\sum\limits_{i=1}^{m_{\mathcal{F}}+2}\hat\mu_iA_i \bullet \hat{X}=0-0=0.
$$
From \eqref{(SD0)-interior}, one can obtain that
$$
0\leq \tilde{Z}\bullet \hat{X}= A_0\bullet \hat{X}+ \sum\limits_{i=1}^{m_{\mathcal{F}}+2}\tilde\mu_iA_i\bullet \hat{X}= \tilde\mu_{m_{\mathcal{F}}+2} A_{m_{\mathcal{F}}+2}\bullet\hat{X},
$$
which, together with $\tilde\mu_{m_{\mathcal{F}}+2}>0$ from \eqref{eq0}, deduces that
$$
A_{m_{\mathcal{F}}+2}\bullet\hat{X}\geq0. 
$$
Then it is easily verified that the zero vector $x=0$ satisfies \eqref{eq:MuZeroOptim}.

\noindent\textbf{Case 2.} {\bf $(A_1\bullet \hat{X},A_{m_{\mathcal{F}}}\bullet \hat{X}, A_{m_{\mathcal{F}}+1}\bullet \hat{X})\neq (0,0,0).$}\\ 
For this case, there must be $ r:=\rank(\hat{X})\geq1$. Denote $$\delta_i:=A_i\bullet\hat{X}, \quad i=1,m_{\mathcal{F}},m_{\mathcal{F}}+1,m_{\mathcal{F}}+2;$$
and without loss of generality, we assume that $\delta_{m_{\mathcal{F}}+1}=A_{m_{\mathcal{F}}+1}\bullet \hat{X}\neq0$. Then we have
\begin{equation*}
\left(A_i-\dfrac{\delta_i}{\delta_{m_{\mathcal{F}}+1}}A_{m_{\mathcal{F}}+1}\right)\bullet\hat{X}=0,
\quad\,i=1,m_{\mathcal{F}},m_{\mathcal{F}}+2.
\end{equation*}
By Lemma \ref{lemma:rank-one}, there is a rank-one decomposition of $\hat{X}$,
$
\hat{X}=\hat x_1\hat x_1^H+\hat x_2\hat x_2^H+\cdots+\hat x_r\hat x_r^H,
$
such that
\begin{equation}\label{eq1}
\left\{
\begin{array}{ll}
\left(A_1-\dfrac{\delta_1}{\delta_{m_{\mathcal{F}}+1}}A_{m_{\mathcal{F}}+1}\right)\bullet  \hat x_k\hat x_k^H=\left(A_{m_{\mathcal{F}}}-\dfrac{\delta_{m_{\mathcal{F}}}}{\delta_{m_{\mathcal{F}}+1}}A_{m_{\mathcal{F}}+1}\right)\bullet \hat x_k\hat x_k^H=0,\,\,\forall\,k=1,2,\cdots,r;\\
\left(A_{m_{\mathcal{F}}+2}-\dfrac{\delta_{m_{\mathcal{F}}+2}}{\delta_{m_{\mathcal{F}}+1}}A_{m_{\mathcal{F}}+1}\right)\bullet \hat x_{k_0}\hat x_{k_0}^H\leq0,\qquad \text{ for some index }
k_0\in \{1,2,\cdots,r\}.
\end{array}
\right.
\end{equation}
Put $t_0:=A_{m_{\mathcal{F}}+1}\bullet \hat x_{k_0}\hat x_{k_0}^H/{\delta_{m_{\mathcal{F}}+1}}$. Then, from \eqref{eq1}, one has
\begin{equation}\label{eq2}
	A_i\bullet \hat x_{k_0}\hat x_{k_0}^H=t_0\delta_i,\,i=1,m_{\mathcal{F}},m_{\mathcal{F}}+1; \text{ and } 	A_{m_{\mathcal{F}}+2}\bullet \hat x_{k_0}\hat x_{k_0}^H\leq t_0\delta_{m_{\mathcal{F}}+2}.
\end{equation}
We assert that $t_0>0$ because,  if $t_0\leq0$,  then $0\neq \hat x_{k_0}\hat x_{k_0}^H-t_0\hat{X}\succeq0$ and one can   from \eqref{eq2} obtain that 
\begin{equation*}
A_i\bullet\left(\hat x_{k_0}\hat x_{k_0}^H-t_0\hat{X}\right)=0,\,\,\,i=1,m_{\mathcal{F}},m_{\mathcal{F}}+1; \text{ and } A_{m_{\mathcal{F}}+2}\bullet\left(\hat x_{k_0}\hat x_{k_0}^H-t_0\hat{X}\right)\leq0\,;
\end{equation*}
which, together with \eqref{(SD0)-interior} and \eqref{eq0}, leads to a contradiction as follows:
$$
\begin{array}{lll}
0&<&\tilde{Z}\bullet\left(\hat x_{k_0}\hat x_{k_0}^H-t_0\hat{X}\right)=A_0\bullet\left(\hat x_{k_0}\hat x_{k_0}^H-t_0\hat{X}\right)+\sum\limits_{i=1}^{m_{\mathcal{F}}+2}\tilde\mu_iA_i\bullet\left(\hat x_{k_0}\hat x_{k_0}^H-t_0\hat{X}\right)\\
&\leq& A_0\bullet\left(\hat x_{k_0}\hat x_{k_0}^H-t_0\hat{X}\right)
=\hat{Z}\bullet\left(\hat x_{k_0}\hat x_{k_0}^H-t_0\hat{X}\right)-\sum\limits_{i=1}^{m_{\mathcal{F}}+2}\hat\mu_iA_i\bullet\left(\hat x_{k_0}\hat x_{k_0}^H-t_0\hat{X}\right)\\
&=&0-0=0.
\end{array}
$$
Hence $t_0>0$. Therefore, from \eqref{eq2}, the rank-one matrix $\hat x_{k_0}\hat x_{k_0}^H/t_0$ satisfies \eqref{eq:MuZeroOptim}.
The proof is completed.
\end{proof}

The following definition plays a pivotal role in our main result.

\begin{definition}\label{I-definition}
Let $\hat{X}$ and $(\hat{Z},\hat{\mu}_1,\hat{\mu}_{m_{\mathcal{F}}},\hat{\mu}_{m_{\mathcal{F}}+1},\hat{\mu}_{m_{\mathcal{F}}+2})$ be an optimal solution pair to $(SP_{m_{\mathcal{F}}+2})$ and $(SD_{m_{\mathcal{F}}+2})$. We say that this pair has Property $\mathcal{I}$ if the following conditions are simultaneously satisfied:
\begin{description}
	\item[(I.1)] $\hat{\mu}_i\neq0$ for each   $``\unlhd_i"=``\leq"$\,\,$\left(i=1,m_{\mathcal{F}}, m_{\mathcal{F}}+1,m_{\mathcal{F}}+2\right)$, that is, all the optimal multipliers corresponding to the inequality constraints are nonzero;
	
	\item[(I.2)]  $\rank(\hat{Z})=n-2$;
	
	\item[(I.3)]  $\rank(\hat{X})=2$;
	
	\item[(I.4)]  there is a rank-one decomposition of $\hat{X}$, $\hat{X}=\hat{x}_1\hat{x}_1^H+\hat{x}_2\hat{x}_2^H$, such that 
\begin{description}
	\item[(I.4.1)]
	 $\left(A_1-\dfrac{c_1}{c_{m_{\mathcal{F}}+2}}
	 A_{m_{\mathcal{F}}+2}\right)\bullet \hat{x}_1\hat{x}_1^H=
	 \left(A_1-\dfrac{c_1}{c_{m_{\mathcal{F}}+2}}A_{m_{\mathcal{F}}+2}\right)\bullet \hat{x}_2\hat{x}_2^H=0$\\ 
	 \quad\text{ and }
	 $\re\left(\hat{x}_1^H\left(A_1-\dfrac{c_1}{c_{m_{\mathcal{F}}+2}}A_{m_{\mathcal{F}}+2}\right)\hat{x}_2\right)\neq0;$
	 
	 \item[(I.4.2)] {\bf (only for $\mathcal{F}=\mathcal{C}$)} 
	 
	  $\quad\,\,\left(A_{m_{\mathcal{F}}}-\dfrac{c_{m_{\mathcal{F}}}}{c_{m_{\mathcal{F}}+2}}
	 A_{m_{\mathcal{F}}+2}\right)\bullet \hat{x}_1\hat{x}_1^H=
	 \left(A_{m_{\mathcal{F}}}-\dfrac{c_{m_{\mathcal{F}}}}{c_{m_{\mathcal{F}}+2}}A_{m_{\mathcal{F}}+2}\right)\bullet \hat{x}_2\hat{x}_2^H=0,$
	 
	 $\quad\,\,\re\left(\hat{x}_1^H\left(A_{m_{\mathcal{F}}}-\dfrac{c_{m_{\mathcal{F}}}}{c_{m_{\mathcal{F}}+2}}A_{m_{\mathcal{F}}+2}\right)\hat{x}_2\right)=0,$
	 
	  $\quad\,\,\im\left(\hat{x}_1^H\left(A_{m_{\mathcal{F}}}-\dfrac{c_{m_{\mathcal{F}}}}{c_{m_{\mathcal{F}}+2}}A_{m_{\mathcal{F}}+2}\right)\hat{x}_2\right)\neq0;$
	  
	  \item[(I.4.3)] 
	  $\left(\left(A_{m_{\mathcal{F}}+1}-\dfrac{c_{m_{\mathcal{F}}+1}}{c_{m_{\mathcal{F}}+2}}
	  A_{m_{\mathcal{F}}+2}\right)\bullet \hat{x}_1\hat{x}_1^H\right)
	  \left(\left(A_{m_{\mathcal{F}}+1}-\dfrac{c_{m_{\mathcal{F}}+1}}{c_{m_{\mathcal{F}}+2}}A_{m_{\mathcal{F}}+2}\right)\bullet \hat{x}_2\hat{x}_2^H\right)<0.$
\end{description}
\end{description}
\end{definition}

\begin{remark}
	When $\mathcal{F}=\mathcal{R}$, {\bf (I.4)} of Property $\mathcal{I}$ can be reformulated in a concise form:
	\begin{description}
%
%
		
		\item[(I.4)]  there is a rank-one decomposition of $\hat{X}$, $\hat{X}=\hat{x}_1\hat{x}_1^T+\hat{x}_2\hat{x}_2^T$, such that 
		\begin{description}
			\item[(I.4.1)]
			$\left(A_1-\dfrac{c_1}{c_3}A_3\right)
			\bullet \hat{x}_1\hat{x}_1^T=
			\left(A_1-\dfrac{c_1}{c_3}A_3\right)\bullet \hat{x}_2\hat{x}_2^T=0\,\text{ and }\,\hat{x}_1^T\left(A_1-\dfrac{c_1}{c_3}A_3\right)\hat{x}_2\neq0;$
			
			\item[(I.4.2)] 
			$\left(\left(A_2-\dfrac{c_2}{c_3}A_3\right)
			\bullet \hat{x}_1\hat{x}_1^T\right)
			\left(\left(A_2-\dfrac{c_2}{c_3}A_3\right)
			\bullet \hat{x}_2\hat{x}_2^T\right)<0.$
		\end{description}
	\end{description}
\end{remark}
Certainly,  Property $\mathcal{I}$ in \cite{AiZhang2009}, Property $\mathcal{J}$ in \cite{Nguyen2019} and 
Property $\mathcal{I}^+$ in \cite{Cheng2021} are included in the above  Property $\mathcal{I}$ as three special cases. 

Right now we are ready to state the main result of the paper. Here $v^*(SP_{m_{\mathcal{F}}+2})$ and $v^*(HQP_{m_{\mathcal{F}}+2})$  denote the optimal objective values of $(SP_{m_{\mathcal{F}}+2})$ and $(HQP_{m_{\mathcal{F}}+2})$, respectively. And $\Omega$ denotes the feasible set of $(HQP_{m_{\mathcal{F}}+2})$.

\begin{theorem}\label{theorem:main-result}
Let Assumption \ref{ass1} hold, and let $\hat{X}$ and $(\hat{Z},\hat{\mu}_1,\hat{\mu}_{m_{\mathcal{F}}},\hat{\mu}_{m_{\mathcal{F}}+1},\hat{\mu}_{m_{\mathcal{F}}+2})$ be any optimal solution pair  of $(SP_{m_{\mathcal{F}}+2})$ and $(SD_{m_{\mathcal{F}}+2})$. Then, $v^*(SP_{m_{\mathcal{F}}+2})<v^*(HQP_{m_{\mathcal{F}}+2})$ or alternatively $\Omega=\emptyset$, if and only if the pair satisfies Property $\mathcal{I}$. Moreover, if the pair does not satisfy Property $\mathcal{I}$, then one can find a global optimal solution to $(HQP_{m_{\mathcal{F}}+2})$ from the pair in polynomial-time.
\end{theorem}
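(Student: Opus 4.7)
The plan is to prove both implications simultaneously through a case analysis on the optimal primal-dual pair, with failure of a rank-one SDP-optimum emerging contrapositively from the construction. Complementarity yields $\range(\hat{X})\subseteq\Null(\hat{Z})=:\mathcal{V}$, and Lemma~\ref{lemma:local-joint-definite} gives joint definiteness of $A_1,A_{m_{\mathcal{F}}},A_{m_{\mathcal{F}}+1},A_{m_{\mathcal{F}}+2}$ on $\mathcal{V}$. Set $\tilde{A}_i:=A_i-\frac{c_i}{c_{m_{\mathcal{F}}+2}}A_{m_{\mathcal{F}}+2}$ for $i\in\{1,m_{\mathcal{F}},m_{\mathcal{F}}+1\}$; under (I.1) complementarity forces every constraint active at $\hat{X}$, so the task reduces to exhibiting $x\in\mathcal{V}$ with $\tilde{A}_i\bullet xx^H=0$ and $A_{m_{\mathcal{F}}+2}\bullet xx^H=c_{m_{\mathcal{F}}+2}$.

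I would first dispense with the easy failures of Property $\mathcal{I}$. If (I.1) fails, Lemma~\ref{lemma:Mu=zero} immediately supplies a rank $\le 1$ optimum. Under (I.1): if (I.3) fails ($\rank\hat{X}\le 1$) then $\hat{X}$ is itself a rank-one optimum; if (I.2) fails (so $\dim\mathcal{V}\ge 3$), I apply Lemma~\ref{theorem:rank-one+2} to the quadruple $(\tilde{A}_1,\tilde{A}_{m_{\mathcal{F}}},\tilde{A}_{m_{\mathcal{F}}+1},A_{m_{\mathcal{F}}+2})$ on $\mathcal{V}$. Its non-vanishing hypothesis transfers from Lemma~\ref{lemma:local-joint-definite}, because simultaneous vanishing of $\tilde{A}_i\bullet Y$ and $A_{m_{\mathcal{F}}+2}\bullet Y$ forces simultaneous vanishing of $A_i\bullet Y$. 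The lemma then delivers a rank-one $x\in\mathcal{V}$ in polynomial time.

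The crux is the case $\dim\mathcal{V}=\rank\hat{X}=2$. Factor $\hat{X}=LL^H$ with $\range L=\mathcal{V}$, and form the traceless Hermitian $2\times 2$ matrices $\tilde{M}_i:=L^H\tilde{A}_i L$ and $N:=L^H A_{m_{\mathcal{F}}+2}L$ (with $\tr N=c_{m_{\mathcal{F}}+2}$). Rank-one decompositions of $\hat{X}$ correspond to $L\mapsto LU$ for $U\in U(2)$ (or $O(2)$ in the real case), acting on the $\tilde{M}_i$ by the induced rotation on the real space of traceless Hermitian $2\times 2$ matrices --- $2$-dimensional with basis $\sigma_x,\sigma_z$ for $\mathcal{F}=\mathcal{R}$ and $3$-dimensional with basis $\sigma_x,\sigma_y,\sigma_z$ for $\mathcal{F}=\mathcal{C}$. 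The key algebraic claim I would establish is that Property~(I.4) is equivalent to the $\tilde{M}_i$ being linearly independent in this real vector space: reading (I.4.1)--(I.4.3) as Pauli coordinates makes the forward direction immediate, while the converse uses the rotational freedom to align $\tilde{M}_{m_{\mathcal{F}}}$ along $\sigma_y$ (complex only), $\tilde{M}_1$ into the plane spanned by $\sigma_x,\sigma_y$ with non-vanishing $\sigma_x$-component, and reads off $\tilde{M}_{m_{\mathcal{F}}+1}$'s $\sigma_z$-component as the obstruction to coplanarity.

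Writing a rank-one $uu^H=\tfrac{1}{2}(I+H_u)$ for $\|u\|=1$ with $H_u$ traceless Hermitian of Frobenius norm $\sqrt{2}$ translates the isotropy system into the Frobenius orthogonalities $\tr(\tilde{M}_i H_u)=0$. When (I.4) holds, the $\tilde{M}_i$ span the full trace-zero space, only $H_u=0$ is orthogonal to all of them, no rank-one SDP-optimum lies in $\mathcal{V}$, and Lemma~\ref{QP-has-optimal-solution} yields $\Omega=\emptyset$ or $v^*(SP_{m_{\mathcal{F}}+2})<v^*(HQP_{m_{\mathcal{F}}+2})$. When (I.4) fails, the orthogonal complement of $\mathrm{span}(\tilde{M}_i)$ is non-trivial and its intersection with $\|H_u\|_F=\sqrt{2}$ produces $u\ne 0$ with $u^H\tilde{M}_i u=0$; Lemma~\ref{lemma:local-joint-definite} rules out $u^H N u=0$, and since $H_u$ and $-H_u$ correspond to orthonormal spinors whose $N$-values sum to $\tr N=c_{m_{\mathcal{F}}+2}$, at least one satisfies $u^H N u/c_{m_{\mathcal{F}}+2}>0$; a real rescaling gives the rank-one optimum in polynomial time. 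I expect the main obstacle to be the (I.4)-linear-independence equivalence in the complex case, where the asymmetric roles of (I.4.1)--(I.4.3) must be carefully reconciled with the $SU(2)\!\to\! SO(3)$ action on the Pauli space, and the final sign-matching depends on the antipodal trace identity.
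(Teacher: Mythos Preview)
Your proposal is correct and complete in outline; the case split and use of Lemmas~\ref{lemma:Mu=zero}, \ref{theorem:rank-one+2}, \ref{lemma:local-joint-definite}, and \ref{QP-has-optimal-solution} match the paper's. The genuine divergence is in the crux case $\dim\mathcal{V}=\rank\hat{X}=2$. The paper first produces, via Lemma~\ref{lemma:rank-one}, a decomposition $\hat{X}=\hat{x}_1\hat{x}_1^H+\hat{x}_2\hat{x}_2^H$ satisfying \eqref{eq4}--\eqref{eq6}, and then works through three explicit subcases according to which clause of (I.4) fails: in Subcase~3.1 each $\hat{x}_k\hat{x}_k^H/t_k$ is already a rank-one optimum; in Subcase~3.2 a real quadratic in a rotation parameter $t$ is solved to manufacture a better decomposition; in Subcase~3.3 the collapse of the $\sigma_y$-direction is exploited via Lemma~\ref{theorem:rank-one+1}(i). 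Sufficiency is handled by a direct contradiction argument showing $\bar\alpha\beta=0$ from (I.4.1)--(I.4.2). Your route instead compresses the problem to $2\times2$ traceless Hermitian matrices $\tilde{M}_i=L^H\tilde{A}_iL$ and replaces the whole of (I.4) by the single invariant statement ``$\tilde{M}_i$ are linearly independent over $\mathcal{R}$''; the $SU(2)\!\to\!SO(3)$ (resp.\ $O(2)\!\to\!SO(2)$) action furnishes the triangular form witnessing (I.4), and both directions of the crux case follow from orthogonality in the Pauli space together with the Bloch-sphere fact that every traceless Hermitian $H$ with $\|H\|_F=\sqrt{2}$ is $2uu^H-I$ for some unit $u$. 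This is cleaner and treats sufficiency and necessity symmetrically, at the cost of making the polynomial-time constructibility slightly less explicit than the paper's subcase-by-subcase recipe; you should note that the alignment rotations and the antipodal choice are themselves computable in closed form. Two small points of care: your parenthetical ``(I.3) fails ($\rank\hat{X}\le1$)'' and ``(I.2) fails (so $\dim\mathcal{V}\ge3$)'' are not literally what failure of (I.2)/(I.3) means, but the omitted possibilities ($\rank\hat{X}\ge3$, or $\rank\hat{Z}\ge n-1$) are absorbed by the other branch via $\range\hat{X}\subseteq\mathcal{V}$; and in the final rescaling step you should state explicitly that the rank-one matrix you output satisfies all $m_{\mathcal{F}}+2$ constraints with equality and is complementary to $(\hat{Z},\hat\mu)$, hence is an $(SP_{m_{\mathcal{F}}+2})$-optimum and yields $x$ feasible and optimal for $(HQP_{m_{\mathcal{F}}+2})$.
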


\begin{proof}
$``\Longleftarrow"$ {\bf\large Sufficiency.}\\
One needs to verify that, if the pair satisfies Property $\mathcal{I}$, then $v^*(SP_{m_{\mathcal{F}}+2})<v^*(HQP_{m_{\mathcal{F}}+2})$ or alternatively $\Omega=\emptyset$. We shall  complete the verification by contradiction. Assume that $\Omega\neq\emptyset$ and  $v^*(SP_{m_{\mathcal{F}}+2})=v^*(HQP_{m_{\mathcal{F}}+2})$.  By Lemma \ref{QP-has-optimal-solution},  $(HQP_{m_{\mathcal{F}}+2})$ has at least one optimal solution, say $x^*$.  Then $x^*x^{*^H}$ is also an optimal solution of $(SP_{m_{\mathcal{F}}+2})$. Due to Property $\mathcal{I}$ (I.1), the primal optimal solution $x^*x^{*^H}$ and the dual optimal solution $(\hat{Z},\hat{\mu}_1,\hat{\mu}_{m_{\mathcal{F}}},\hat{\mu}_{m_{\mathcal{F}}+1},\hat{\mu}_{m_{\mathcal{F}}+2})$ must satisfy the complementary condition as follows:
\begin{equation}\label{KKT_condition}
	\hat{Z}\bullet x^*x^{*^H}=0,\ A_{i}\bullet x^*x^{*^H}=c_i,\,i=1,m_{\mathcal{F}},m_{\mathcal{F}}+1,m_{\mathcal{F}}+2.
\end{equation}
Notice that $\hat{Z}\bullet x^*x^{*^H}=0$ means $x^*\in \Null(\hat{Z})$. And both conditions $ ``\rank(\hat{Z})=n-2"$ and $``\rank(\hat{X})=2"$ imply $L(\hat{x}_1,\hat{x}_2)=\Null(\hat{Z})$, 
where $L(\hat{x}_1,\hat{x}_2)$ denotes the linear subspace spanned by the vectors $\hat{x}_1$ and $\hat{x}_2$.
Hence $x^*$ must be a linear combination of $\hat{x}_1$ and $\hat{x}_2$, say
\begin{equation}\label{linear_combination}
x^*=\alpha\hat{x}_1+\beta\hat{x}_2,\quad\alpha,\beta\in\mathcal{F}.
\end{equation}
From \eqref{KKT_condition} and \eqref{linear_combination}, one obtains
\begin{equation}\label{eq13}
\begin{cases}
\left(A_1-\dfrac{c_1}{c_{m_{\mathcal{F}}+2}}
A_{m_{\mathcal{F}}+2}\right)\bullet (\alpha\hat{x}_1+\beta\hat{x}_2)(\alpha\hat{x}_1+\beta\hat{x}_2)^{H}=0,\\
\left(A_{m_{\mathcal{F}}}-\dfrac{c_{m_{\mathcal{F}}}}{c_{m_{\mathcal{F}}+2}}A_{m_{\mathcal{F}}+2}\right)
\bullet (\alpha\hat{x}_1+\beta\hat{x}_2)(\alpha\hat{x}_1+\beta\hat{x}_2)^{H}=0.
\end{cases}
\end{equation}
By using (I.4) of Property $\mathcal{I}$, \eqref{eq13} can be reduced into the following form:
\begin{equation}\label{eq17}
\begin{cases}
	\re\left(\hat{x}_1^H(A_1-\dfrac{c_1}{c_{m_{\mathcal{F}}+2}}
	A_{m_{\mathcal{F}}+2}) \hat{x}_2\,\overline{\alpha}\beta\right)=0\\
	\im\left(\hat{x}_1^H(A_{m_{\mathcal{F}}}-\dfrac{c_{m_{\mathcal{F}}}}{c_{m_{\mathcal{F}}+2}}A_{m_{\mathcal{F}}+2})\hat{x}_2\right)\im\left(\overline{\alpha}\beta\right)=0
\end{cases}
\Longrightarrow\,\,
 \overline\alpha\beta=0,
\end{equation}
which implies either $\alpha=0$ or $\beta=0$. Without loss of generality, we assume $\beta=0$, that is $x^*=\alpha\hat{x}_1$. Substituting $x^*=\alpha\hat{x}_1$ into \eqref{KKT_condition}, one yields
$$
\begin{cases}
	0\neq c_{m_{\mathcal{F}}+2}= A_{m_{\mathcal{F}}+2}\bullet x^*x^{*^H}=|\alpha|^2A_{m_{\mathcal{F}}+2}\bullet \hat x_1\hat x_1^H \Longrightarrow\,\,
	\alpha\neq0,\\
	0=\left(A_{m_{\mathcal{F}}+1}-\dfrac{c_{m_{\mathcal{F}}+1}}{c_{m_{\mathcal{F}}+2}}
	A_{m_{\mathcal{F}}+2}\right)\bullet x^*x^{*^H} 
	=|\alpha|^2\left(A_{m_{\mathcal{F}}+1}-\dfrac{c_{m_{\mathcal{F}}+1}}{c_{m_{\mathcal{F}}+2}}
	A_{m_{\mathcal{F}}+2}\right)\bullet \hat{x}_1\hat{x}_1^H\\
	\Longrightarrow\,\,\left(A_{m_{\mathcal{F}}+1}-\dfrac{c_{m_{\mathcal{F}}+1}}{c_{m_{\mathcal{F}}+2}}
	A_{m_{\mathcal{F}}+2}\right)\bullet \hat{x}_1\hat{x}_1^H=0,
\end{cases}
$$
which contradicts with Property $\mathcal{I}$ (I.4.3).
So there must be $v^*(SP_{m_{\mathcal{F}}+2})<v^*(HQP_{m_{\mathcal{F}}+2})$ when $\Omega\neq\emptyset$.

\noindent$``\Longrightarrow"$ {\bf\large Necessity.}\\
We shall prove the ``Necessity"  by contradiction: if Property $\mathcal{I}$ fails, then $\Omega\neq\emptyset$ and $v^*(SP_{m_{\mathcal{F}}+2})=v^*(HQP_{m_{\mathcal{F}}+2})$, that is one can find an optimal solution to $(SP_{m_{\mathcal{F}}+2})$ with a rank less than $2$. 

Firstly, if $\rank(\hat{X})< 2$, then $\hat{X}$ is just a desired optimal solution to $(SP_{m_{\mathcal{F}}+2})$. So we assume 
\begin{equation}\label{eq:rankXgeq2}
\rank(\hat{X})\geq2,
\end{equation}
which yields
\begin{equation}\label{eq:rankZleqn-2}
\rank(\hat{Z})\leq n-2,
\end{equation}
due to $\range(\hat{X})\subseteq\Null(\hat{Z})$.  Our proof proceeds to the following three cases.

	
\noindent\textbf{Case 1.}  {\bf Property $\mathcal{I}$ {\textbf{(I.1)}} fails.}\\
It means that there is one multiplier  $\hat{\mu}_{i_0}$ such that $\hat{\mu}_{i_0}=0$ and $``\unlhd_{i_0}"=``\leq"$. For this case, Lemma \ref{lemma:Mu=zero} tells us that one can find an optimal solution  with a rank less than $2$ to $(SP_{m_{\mathcal{F}}+2})$.

\noindent\textbf{Case 2.} {\bf Property $\mathcal{I}$ {\rm\textbf{(I.1)}} holds, but at least one of {\rm\textbf{(I.2)}} and {\rm\textbf{(I.3)}} fails.}\\
As Property $\mathcal{I}$ {\rm\text{(I.1)}} holds, the complementary condition of $\hat{X}$ and $(\hat{Z},\hat{\mu}_1,\hat{\mu}_{m_{\mathcal{F}}},\hat{\mu}_{m_{\mathcal{F}}+1},\hat{\mu}_{m_{\mathcal{F}}+2})$ turns into the following form: 
\begin{equation}\label{eq:PI(I.1)+hatXcomplem}
\left\{
	\begin{array}{l}
		\hat{Z}\bullet\hat{X}=0,\\
		A_{i}\bullet\hat{X}=c_i,\,i=1,m_{\mathcal{F}},m_{\mathcal{F}}+1,m_{\mathcal{F}}+2.
	\end{array}
\right.
\end{equation}
Moreover, by \eqref{eq:rankXgeq2} and \eqref{eq:rankZleqn-2}, a violation of Property $\mathcal{I}$ {\rm\text{(I.2)}} or {\rm\text{(I.3)}} must yield $\dim(\Null(\hat{Z}))\geq3$. Note that $c_{m_{\mathcal{F}}+2}\neq0.$  Therefore, by Lemma \ref{lemma:local-joint-definite} and Lemma \ref{theorem:rank-one+2}, one can find a nonzero vector $x\in \Null(\hat{Z})$ such that  
	\begin{equation*}
		A_i\bullet xx^H=c_i,\,i=1,m_{\mathcal{F}},m_{\mathcal{F}}+1,m_{\mathcal{F}}+2.
	\end{equation*}
Thus $xx^H$ is feasible for $(SP_{m_{\mathcal{F}}+2})$ and satisfies the complementary condition \eqref{eq:PI(I.1)+hatXcomplem}, which means that the rank-one matrix $xx^H$ is an optimal solution to $(SP_{m_{\mathcal{F}}+2})$.
	
\noindent\textbf{Case 3.} {\bf  {\rm\textbf{(I.1)}}, {\rm\textbf{(I.2)}} and {\rm\textbf{(I.3)}} of Property $\mathcal{I}$ hold, but {\rm\textbf{(I.4)}} fails.}\\
In this case, the complementary condition is still described by \eqref{eq:PI(I.1)+hatXcomplem}, and both conditions $``\rank(\hat{Z})=n-2"$ and $``\rank(\hat{X})=2"$ hold. Note that 
\begin{equation*}
	\left(A_i-\dfrac{c_i}{c_{m_{\mathcal{F}}+2}}A_{m_{\mathcal{F}}+2}\right)\bullet\hat{X}=0,\,i=1,m_{\mathcal{F}}, m_{\mathcal{F}}+1.
\end{equation*}
By Lemma \ref{lemma:rank-one}, there is a rank-one decomposition of $\hat{X}$, $\hat{X}=\hat{x}_1\hat{x}_1^H+\hat{x}_2\hat{x}_2^H$, such that
\begin{subequations}
   \begin{align}
		\left(A_1-\dfrac{c_1}{c_{m_{\mathcal{F}}+2}}
		A_{m_{\mathcal{F}}+2}\right)\bullet \hat{x}_1\hat{x}_1^H=
		\left(A_1-\dfrac{c_1}{c_{m_{\mathcal{F}}+2}}A_{m_{\mathcal{F}}+2}\right)\bullet \hat{x}_2\hat{x}_2^H=0,\label{eq4}\\
		\left(A_{m_{\mathcal{F}}}-\dfrac{c_{m_{\mathcal{F}}}}{c_{m_{\mathcal{F}}+2}}
		A_{m_{\mathcal{F}}+2}\right)\bullet \hat{x}_1\hat{x}_1^H=
		\left(A_{m_{\mathcal{F}}}-\dfrac{c_{m_{\mathcal{F}}}}{c_{m_{\mathcal{F}}+2}}A_{m_{\mathcal{F}}+2}\right)\bullet \hat{x}_2\hat{x}_2^H=0,\label{eq5}\\
		\re\left(\hat{x}_1^H\left(A_{m_{\mathcal{F}}}-\dfrac{c_{m_{\mathcal{F}}}}{c_{m_{\mathcal{F}}+2}}
		A_{m_{\mathcal{F}}+2}\right)\hat{x}_2\right)=0. \label{eq6}
	\end{align}
\end{subequations}
Here \eqref{eq6} holds only for $\mathcal{F}=\mathcal{C}$, which can be obtained by resetting $\hat{x}_2:=e^{i\,\theta_0}\hat{x}_2$ and by choosing an appropriate phase value $\theta_0$. Due to \eqref{eq4}, \eqref{eq5} and \eqref{eq6}, the statement ``Property $\mathcal{I}$ (I.4) fails" means that  at least one of the following three sub-statements is true:\vspace{1mm}\\
 ``(I.4.3) fails";  $``\re\left(\hat{x}_1^H(A_1-\dfrac{c_1}{c_{m_{\mathcal{F}}+2}}A_{m_{\mathcal{F}}+2})\hat{x}_2\right)=0"$; $``\im\left(\hat{x}_1^H(A_{m_{\mathcal{F}}}-\dfrac{c_{m_{\mathcal{F}}}}{c_{m_{\mathcal{F}}+2}}A_{m_{\mathcal{F}}+2})\hat{x}_2\right)=0"$.\vspace{1mm}\\  
 So our proof proceeds to the following three subcases. 

\noindent\textbf{Subcase 3.1. Property  $\mathcal{I}$ (I.4.3) fails.} \\ 
It means that $$\left((A_{m_{\mathcal{F}}+1}-\dfrac{c_{m_{\mathcal{F}}+1}}{c_{m_{\mathcal{F}}+2}}
A_{m_{\mathcal{F}}+2})\bullet \hat{x}_1\hat{x}_1^H\right)
\left((A_{m_{\mathcal{F}}+1}-\dfrac{c_{m_{\mathcal{F}}+1}}{c_{m_{\mathcal{F}}+2}}A_{m_{\mathcal{F}}+2})\bullet \hat{x}_2\hat{x}_2^H\right)\geq0.$$ 
Note that $$\left(A_{m_{\mathcal{F}}+1}-\dfrac{c_{m_{\mathcal{F}}+1}}{c_{m_{\mathcal{F}}+2}}
A_{m_{\mathcal{F}}+2}\right)\bullet \hat{x}_1\hat{x}_1^H
+\left(A_{m_{\mathcal{F}}+1}-\dfrac{c_{m_{\mathcal{F}}+1}}{c_{m_{\mathcal{F}}+2}}A_{m_{\mathcal{F}}+2}\right)\bullet \hat{x}_2\hat{x}_2^H
=\left(A_{m_{\mathcal{F}}+1}-\dfrac{c_{m_{\mathcal{F}}+1}}{c_{m_{\mathcal{F}}+2}}
A_{m_{\mathcal{F}}+2}\right)\bullet\hat{X}=0,$$
which deduces that $$\left(A_{m_{\mathcal{F}}+1}-\dfrac{c_{m_{\mathcal{F}}+1}}{c_{m_{\mathcal{F}}+2}}
A_{m_{\mathcal{F}}+2}\right)\bullet \hat{x}_1\hat{x}_1^H=\left(A_{m_{\mathcal{F}}+1}-\dfrac{c_{m_{\mathcal{F}}+1}}{c_{m_{\mathcal{F}}+2}}A_{m_{\mathcal{F}}+2}\right)\bullet \hat{x}_2\hat{x}_2^H=0.$$
Thus, from \eqref{eq4} and \eqref{eq5}, one has 
\begin{equation}\label{eq7}
	\left(A_i-\dfrac{c_i}{c_{m_{\mathcal{F}}+2}}
	A_{m_{\mathcal{F}}+2}\right)\bullet \hat{x}_1\hat{x}_1^H=
	\left(A_i-\dfrac{c_i}{c_{m_{\mathcal{F}}+2}}A_{m_{\mathcal{F}}+2}\right)\bullet \hat{x}_2\hat{x}_2^H=0,\,\,i=1,m_{\mathcal{F}},m_{\mathcal{F}}+1.
\end{equation}
Put $t_k:={A_{m_{\mathcal{F}}+2}\bullet \hat{x}_k\hat{x}_k^H}/{c_{m_{\mathcal{F}}+2}}$ ($k=1,2$). Then \eqref{eq7} becomes 
\begin{equation}\label{eq8}
	\hat{Z}\bullet \hat{x}_k\hat{x}_k^H=0,\,k=1,2;\,\,\,A_i\bullet \hat{x}_k\hat{x}_k^H=t_kc_i,\,\,k=1,2 \text{ and } i=1,m_{\mathcal{F}},m_{\mathcal{F}}+1,m_{\mathcal{F}}+2.
\end{equation}
One can assert that  $t_k>0$ for each $k$ because, if $t_k\leq0$, then $0\neq \hat{x}_k\hat{x}_k^H-t_k\hat{X}\succeq0$ and one obtains that
$$
\hat{Z}\bullet (\hat{x}_k\hat{x}_k^H-t_k\hat{X})=0,\,A_i\bullet (\hat{x}_k\hat{x}_k^H-t_k\hat{X})=0\,\,(i=1,m_{\mathcal{F}},m_{\mathcal{F}}+1,m_{\mathcal{F}}+2),
$$
which contradicts with Lemma \ref{lemma:local-joint-definite}.
Hence  $t_k>0$ ($k=1,2$). 
Therefore, \eqref{eq8} implies that both  $\hat{x}_1\hat{x}_1^H/t_1$ and $\hat{x}_2\hat{x}_2^H/t_2$  are two rank-one optimal solutions to $(SP_{m_{\mathcal{F}}+2})$.

\noindent\textbf{Subcase 3.2. (I.4.3) holds, but $\re\left(\hat{x}_1^H\left(A_1-\dfrac{c_1}{c_{m_{\mathcal{F}}+2}}A_{m_{\mathcal{F}}+2}\right)\hat{x}_2\right)=0$ for  (I.4.1).}\\
This subcase, together with \eqref{eq4},\eqref{eq5} and \eqref{eq6}, makes the following formula hold:
\begin{equation}\label{eq16}
	(\alpha\hat{x}_1+\beta\hat{x}_2)^H(A_i-\dfrac{c_i}{c_{m_{\mathcal{F}}+2}}A_{m_{\mathcal{F}}+2})(\alpha\hat{x}_1+\beta\hat{x}_2)=0,\,i=1,m_\mathcal{F};\,\forall\, \alpha,\beta\in \mathcal{R}.
\end{equation}
 Put 
\begin{equation}\label{eq16-1}
\alpha=\dfrac{t}{\sqrt{1+t^2}} \text{ and } \beta=\dfrac{1}{\sqrt{1+t^2}},
\end{equation}
where $t$ is an unknown real number.
Then, substituting \eqref{eq16-1} into the following equation:
\begin{equation*}
	(\alpha\hat{x}_1+\beta\hat{x}_2)^H(A_{m_{\mathcal{F}}+1}-\dfrac{c_{m_{\mathcal{F}}+1}}{c_{m_{\mathcal{F}}+2}}A_{m_{\mathcal{F}}+2})(\alpha\hat{x}_1+\beta\hat{x}_2)	=0,
\end{equation*}
one obtains a quadratic equation of $t$: 
\begin{equation}\label{eq:t-equation}
at^2+2bt+c=0,
\end{equation}
where
$$
\begin{array}{ll}
a=\hat{x}_1^H\left(A_{m_{\mathcal{F}}+1}-\dfrac{c_{m_{\mathcal{F}}+1}}{c_{m_{\mathcal{F}}+2}}A_{m_{\mathcal{F}}+2}\right)\hat{x}_1,\\
b=\re\left(\hat{x}_1^H\left(A_{m_{\mathcal{F}}+1}-\dfrac{c_{m_{\mathcal{F}}+1}}{c_{m_{\mathcal{F}}+2}}A_{m_{\mathcal{F}}+2}\right)\hat{x}_2\right),\\
c=\hat{x}_2^H\left(A_{m_{\mathcal{F}}+1}-\dfrac{c_{m_{\mathcal{F}}+1}}{c_{m_{\mathcal{F}}+2}}A_{m_{\mathcal{F}}+2}\right)\hat{x}_2.
\end{array}
$$
As Property $\mathcal{I}$ (I.4.3) holds, the equation \eqref{eq:t-equation} must have two distinct real roots with opposite signs. Let $\hat{t}$ be the positive real root. Define 
$$\bar{x}_1:=\dfrac{\hat{t}\hat{x}_1+\hat{x}_2}{\sqrt{1+\hat{t}^2}},\quad \bar{x}_2:=\dfrac{-\hat{x}_1+\hat{t}\hat{x}_2}{\sqrt{1+\hat{t}^2}}.$$
Then we find a new rank-one decomposition of $\hat{X}$, $\hat{X}=\bar{x}_1\bar{x}_1^H+\bar{x}_2\bar{x}_2^H$, such that
\begin{equation}\label{eq11}
	\left(A_i-\dfrac{c_i}{c_{m_{\mathcal{F}}+2}}
	A_{m_{\mathcal{F}}+2}\right)\bullet \bar{x}_1\bar{x}_1^H=
	\left(A_i-\dfrac{c_i}{c_{m_{\mathcal{F}}+2}}A_{m_{\mathcal{F}}+2}\right)\bullet \bar{x}_2\bar{x}_2^H=0,\,\,i=1,m_{\mathcal{F}},m_{\mathcal{F}}+1.
\end{equation}
Note that \eqref{eq11} is just as same as \eqref{eq7}. Thus, following the proof process of Subcase 3.1, one can also find a rank-one optimal solution of $(SP_{m_{\mathcal{F}}+2})$.

\noindent\textbf{Subcase 3.3.\,  (I.4.1) and (I.4.3) hold, but $\im\left(\hat{x}_1^H\left(A_{m_{\mathcal{F}}}-\dfrac{c_{m_{\mathcal{F}}}}{c_{m_{\mathcal{F}}+2}}
	A_{m_{\mathcal{F}}+2}\right)\hat{x}_2\right)=0$ for (I.4.2)\,\,\,  (only for $\mathcal{F}=\mathcal{C}$).}\\
This subcase is considered only for $\mathcal{F}=\mathcal{C}$.    $``\im\left(\hat{x}_1^H\left(A_{m_{\mathcal{F}}}-\dfrac{c_{m_{\mathcal{F}}}}{c_{m_{\mathcal{F}}+2}}
A_{m_{\mathcal{F}}+2}\right)\hat{x}_2\right)=0"$, combined with \eqref{eq5} and \eqref{eq6}, implies that  
\begin{equation}\label{eq18-0}
	\left(A_{m_{\mathcal{F}}}-\dfrac{c_{m_{\mathcal{F}}}}{c_{m_{\mathcal{F}}+2}}
	A_{m_{\mathcal{F}}+2}\right)\bullet\, (\alpha\hat{x}_1+\beta\hat{x}_2) (\alpha\hat{x}_1+\beta\hat{x}_2)^H=0, \,\forall\, \alpha,\beta\in \mathcal{C}.
\end{equation}
Note that $L(\hat{x}_1,\hat{x}_2)=\range(\hat{X})$, where $L(\hat{x}_1,\hat{x}_2)$ denotes the linear subspace spanned by the vectors $\hat{x}_1$ and $\hat{x}_2$. Thus 
\eqref{eq18-0} is exactly equivalent to  
\begin{equation}\label{eq18}
	A_{m_{\mathcal{F}}}\bullet xx^H-\dfrac{c_{m_{\mathcal{F}}}}{c_{m_{\mathcal{F}}+2}}
	A_{m_{\mathcal{F}}+2}\bullet xx^H=\left(A_{m_{\mathcal{F}}}-\dfrac{c_{m_{\mathcal{F}}}}{c_{m_{\mathcal{F}}+2}}
	A_{m_{\mathcal{F}}+2}\right)\bullet xx^H =0, \,\forall\, x\in\range(\hat{X}).  
\end{equation}
Applying Lemma \ref{theorem:rank-one+1} (i) to the following system:
$$
A_1\bullet \hat{X}=c_1,\,\,\,A_{m_{\mathcal{F}}+1}\bullet \hat{X}=c_{m_{\mathcal{F}}+1},\,\,\,A_{m_{\mathcal{F}}+2}\bullet \hat{X}=c_{m_{\mathcal{F}}+2}\,\,(c_{m_{\mathcal{F}}+2}\neq0),
$$
one can find a vector $x\in\range(\hat{X})\subseteq\Null(\hat{Z})$, such that
\begin{equation*}
	A_1\bullet xx^H=c_1,\,\,\,A_{m_{\mathcal{F}}+1}\bullet  xx^H=c_{m_{\mathcal{F}}+1},\,\,\,A_{m_{\mathcal{F}}+2}\bullet  xx^H=c_{m_{\mathcal{F}}+2}.
\end{equation*} 
Furthermore, substituting $A_{m_{\mathcal{F}}+2}\bullet  xx^H=c_{m_{\mathcal{F}}+2}$ into (\ref{eq18}),  one obtains that $A_{m_{\mathcal{F}}}\bullet  xx^H=c_{m_{\mathcal{F}}}$.
Therefore, we find a rank-one matrix $xx^H$ that is feasible to $(SP_{m_{\mathcal{F}}+2})$ and satisfies the complementary condition \eqref{eq:PI(I.1)+hatXcomplem}, which means that  $xx^H$ is
 a rank-one optimal solution of $(SP_{m_{\mathcal{F}}+2})$. The proof is completed.
\end{proof}

\begin{remark}\label{remark:No-feasible-solution}  
Even under the Assumption \ref{ass1}, when the Property $\mathcal{I}$ holds, the original problem may indeed have no feasible solution. This fact can be illustrated by the following example.
\end{remark}

\begin{example}
	Consider the following instance of the model $(SP_3)$ over the real field:
	\begin{equation}\label{eq:example3.1SP}
		\begin{array}{lll}
		&\mbox{minimize}   &A_0\bullet X\\
			&\mbox{subject to}&A_1\bullet X=0,\\
			&&A_2\bullet X=0,\\
			& &A_3\bullet X\leq-2,\\
			&&X\succeq0,
		\end{array}
	\end{equation}
	where \begin{equation*}
		A_0=\left[\begin{array}{cc}
			0&0\\
			0&1
		\end{array}\right], 
		A_1=\left[\begin{array}{cc}
			1&0\\
			0&-1
		\end{array}\right], 
		A_2=\left[\begin{array}{cc}
			0&1\\
			1&0
		\end{array}\right],
		A_3=\left[\begin{array}{cc}
			-1&0\\
			0&-1
		\end{array}\right].
	\end{equation*}
	Its dual problem can be written as follows:
	\begin{equation}\label{eq:example3.1SD}
		\begin{array}{lll}
			&\mbox{maximize}   &2\mu_3\\
			&\mbox{subject to} & Z=A_0+\mu_1A_1+\mu_2A_2+\mu_3A_3\succeq0,\\
			&&  \mu_3\geq0,
		\end{array}
	\end{equation}
	Both problems \eqref{eq:example3.1SP} and \eqref{eq:example3.1SD} have interior feasible solutions
	$$\tilde{X}=\left[\begin{array}{cc}
		2&0\\
		0&2
	\end{array}\right] \text{ and } 
	\tilde{Z}=\left[\begin{array}{cc}
		0.25&0\\
		0&0.25
	\end{array}\right],\,
	(\tilde{\mu}_1,\,\tilde{\mu}_2,\,\tilde{\mu}_3)=(0.5,0,0.25),$$ 
	respectively. And their optimal solutions  are $\hat{X}=\left[\begin{array}{cc}
		1&0\\
		0&1
	\end{array}\right]$ and $(\hat{Z},\,\hat{\mu}_1,\,\hat{\mu}_2,\,\hat{\mu}_3)=(\textbf{0}_{2\times2},0.5,0,0.5)$ with an optimal objective value $1$, respectively. One can find a rank-one decomposition of $\hat{X}$, 
	$\hat{X}=\hat{x}_1\hat{x}_1^T+\hat{x}_2\hat{x}_2^T,$ such that 
	$$
	\begin{array}{l}
	\hat{x}_1=[\sqrt{2}/2,\sqrt{2}/2]^T,\,\quad\, \hat{x}_2=[-\sqrt{2}/2,\sqrt{2}/2]^T,\\
	A_1\bullet\hat{x}_1\hat{x}_1^T=A_1\bullet\hat{x}_2\hat{x}_2^T=0,\,\quad\,\hat{x}_1^TA_1\hat{x}_2\neq0,\\
	(A_2\bullet\hat{x}_1\hat{x}_1^T)(A_2\bullet\hat{x}_2\hat{x}_2^T)<0,
	\end{array}
	$$
	 that is the Property $\mathcal{I}$ holds for the pair $\hat{X}$ and $(\hat{Z},\hat{\mu}_1,\hat{\mu}_2,\hat{\mu}_3)$.
	However, their original problem 
	\begin{equation*}
		\begin{array}{lll}
			&\mbox{minimize}   &x^TA_0x\\
			&\mbox{subject to}&x^TA_1x=0,\\
			&&x^TA_2x=0,\\
			& &x^TA_3x\leq-2,\\
			&&x\in\mathcal{R}^2,
		\end{array}
	\end{equation*}
	has no feasible solution. 
\end{example}


%

\begin{corollary}
	Under Assumption \ref{ass1}, $(HQP_{m_{\mathcal{F}}+1})$ has an optimal solution and satisfies $v^*(HQP_{m_{\mathcal{F}}+1})$ $=v^*(SP_{m_{\mathcal{F}}+1})$. Moreover one can find an optimal solution to $(HQP_{m_{\mathcal{F}}+1})$ from any optimal solution pair of $(SP_{m_{\mathcal{F}}+1})$ and $(SD_{m_{\mathcal{F}}+1})$ in polynomial-time.  
\end{corollary}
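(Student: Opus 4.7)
The plan is to reduce to Theorem~\ref{theorem:main-result} by adjoining a vacuous constraint. To $(HQP_{m_{\mathcal{F}}+1})$, I would append the extra inequality $x^H A_{m_{\mathcal{F}}+2} x \le c_{m_{\mathcal{F}}+2}$ with $A_{m_{\mathcal{F}}+2} = 0$ and $c_{m_{\mathcal{F}}+2} = 1$, producing an instance $(HQP'_{m_{\mathcal{F}}+2})$ that falls inside the framework of Theorem~\ref{theorem:main-result}. Because the added constraint is satisfied by every vector, both $(HQP'_{m_{\mathcal{F}}+2})$ and its SDP relaxation $(SP'_{m_{\mathcal{F}}+2})$ share their feasible sets and optimal values with $(HQP_{m_{\mathcal{F}}+1})$ and $(SP_{m_{\mathcal{F}}+1})$, respectively.

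Next I would verify that Assumption~\ref{ass1} lifts to the augmented problem. Any primal Slater point of $(SP_{m_{\mathcal{F}}+1})$ remains Slater for $(SP'_{m_{\mathcal{F}}+2})$ since $0 \bullet X = 0 < 1$. For the dual, I would take the multipliers $\tilde\mu_1,\ldots,\tilde\mu_{m_{\mathcal{F}}+1}$ provided by Assumption~\ref{ass1}(ii) and adjoin $\tilde\mu_{m_{\mathcal{F}}+2} = 1 > 0$; since $A_{m_{\mathcal{F}}+2}=0$, the positive definite matrix $\tilde Z$ is unchanged. Given any optimal pair $\hat X$, $(\hat Z,\hat\mu_1,\ldots,\hat\mu_{m_{\mathcal{F}}+1})$ of $(SP_{m_{\mathcal{F}}+1})$ and $(SD_{m_{\mathcal{F}}+1})$, the extended pair $\hat X$, $(\hat Z,\hat\mu_1,\ldots,\hat\mu_{m_{\mathcal{F}}+1},0)$ is feasible for $(SP'_{m_{\mathcal{F}}+2})$ and $(SD'_{m_{\mathcal{F}}+2})$, attains the same primal and dual objective values, and still satisfies the complementary condition \eqref{complement-condition}; thus it is a primal-dual optimal pair for the augmented SDPs.

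Finally I would apply Theorem~\ref{theorem:main-result} to the extended pair and argue that Property $\mathcal{I}$ cannot hold: condition (I.1) requires every multiplier of a $\le$-constraint to be nonzero, yet the appended multiplier is $\hat\mu_{m_{\mathcal{F}}+2}=0$, which is in fact forced by complementarity because $A_{m_{\mathcal{F}}+2}\bullet \hat X = 0 \ne 1 = c_{m_{\mathcal{F}}+2}$. Consequently Theorem~\ref{theorem:main-result} rules out both alternatives $\Omega'=\emptyset$ and $v^*(SP'_{m_{\mathcal{F}}+2})<v^*(HQP'_{m_{\mathcal{F}}+2})$, and moreover produces a global optimum of $(HQP'_{m_{\mathcal{F}}+2})$ from the pair in polynomial time. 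Pulling these conclusions back through the identification of the augmented problem with $(HQP_{m_{\mathcal{F}}+1})$ delivers the corollary. I do not foresee a substantive obstacle; the entire argument is a short reduction whose only checks—Slater preservation and the failure of (I.1)—are immediate from the choice $A_{m_{\mathcal{F}}+2}=0$ together with complementarity.
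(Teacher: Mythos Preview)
Your proposal is correct and follows the same reduction strategy as the paper: embed $(HQP_{m_{\mathcal F}+1})$ into $(HQP_{m_{\mathcal F}+2})$ by adjoining a vacuous constraint with the zero matrix, then invoke Theorem~\ref{theorem:main-result}. The only difference is in which clause of Property~$\mathcal I$ is violated: the paper appends the \emph{equality} $x^H 0_{n\times n} x = 0$ as the \emph{first} constraint, so that (I.4.1) fails because the cross-term $\hat x_1^H 0\,\hat x_2$ vanishes, whereas you append the \emph{inequality} $x^H 0_{n\times n} x \le 1$ as the \emph{last} constraint, so that (I.1) fails because complementarity forces $\hat\mu_{m_{\mathcal F}+2}=0$. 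Both executions are equally valid and equally short.
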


\begin{proof}
 We may regard $(HQP_{m_{\mathcal{F}}+1})$ as an $(HQP_{m_{\mathcal{F}}+2})$ instance by adding an identical equation constraint $x^H0_{n\times n}x=0$.  Taking the identical equation as the first constraint, one can verify that the Property $\mathcal{I}$ (I.4.1)  fails for the instance, due to the cross-term $x_1^H0_{n\times n}x_2\equiv 0$ ($\forall x_1,x_2\in \mathcal{F}^n$). Thus by Theorem \ref{theorem:main-result}, one can find in polynomial-time an optimal solution to $(HQP_{m_{\mathcal{F}}+1})$ from its SDP relaxation, and $v^*(HQP_{m_{\mathcal{F}}+1})=v^*(HQP_{m_{\mathcal{F}}+2})=v^*(SP_{m_{\mathcal{F}}+2})=v^*(SP_{m_{\mathcal{F}}+1})$. 
\end{proof}

\section{S-lemma's extension on three real and four complex quadratic forms}
As an application of Theorem \ref{theorem:main-result}, in this section  we shall generalize S-lemma and Yuan's lemma to three real and four complex homogeneous quadratic functions. S-lemma \cite{Fradkov1973,Yakubo1971}  and Yuan's lemma  \cite{Yuan1990}, as you know, have a lot of important applications. However, they can not hold on three real homogeneous quadratic functions in general. 
Moreover, a sufficient condition can be found in \cite{Telarky2007}, which ensures that S-lemma holds  on three real homogeneous quadratic functions. One can easily verify that the sufficient condition is not a necessity indeed. What is an exact condition, that is a not only sufficient but also necessary condition, which can just guarantee the correctness of S-lemma on three real homogeneous quadratic functions? We manage to answer the question by using Theorem \ref{theorem:main-result}.

Let us consider two related optimization problems:
\begin{equation*}
	\begin{array}{lll}
		(HQP0_2)&\mbox{minimize}   &x^TA_0x\\
		&\mbox{subject to} &
		\begin{cases}
			x^TA_1x\leq0,\\
			x^TA_2x\leq0,
		\end{cases}
	\end{array}
\end{equation*}
and 
\begin{equation*}
	\begin{array}{lll}
		(HQP_3)&\mbox{minimize}   &x^TA_0x\\
		&\mbox{subject to} &
		\begin{cases}
			x^TA_1x\leq0,\\
			x^TA_2x\leq0,\\
			x^Tx\leq1.
		\end{cases}
	\end{array}
\end{equation*}
For $(HQP0_2)$ and $(HQP_3)$,  the following  facts are apparent: they both always have a feasible solution $x=0$;  the optimal objective value of $(HQP0_2)$ is either  $0$ or $-\infty$;  $(HQP_3)$ always has optimal solutions, and $v^*(HQP_3)=0$ if and only if $v^*(HQP0_2)=0$. The above facts tell us that one can characterize the optimal objective value of $(HQP0_2)$ by solving $(HQP_3)$. The SDP relaxation of $(HQP_3)$ is
\begin{equation*}
	\begin{array}{lll}
		(SP_3)&\mbox{minimize}   &A_0\bullet X\\
		&\mbox{subject to} &
		\begin{cases}
			A_1\bullet X\leq 0,\\
			A_2\bullet X\leq 0,\\
			I_n\bullet X\leq1,\\
			X\succeq0,
		\end{cases}
	\end{array}
\end{equation*}
and the dual problem of $(SP_3)$ is
\begin{equation*}
	\begin{array}{lll}
		(SD_3)&\mbox{maximize}   &-\mu_3\\
		&\mbox{subject to} & Z=A_0+\mu_1A_1+\mu_2A_2+\mu_3I_n\succeq0,\\
		&&  \mu_1\geq0,\,\mu_2\geq0,\,\mu_3\geq0,\\
	\end{array}
\end{equation*}
where $I_n$ is the $n\times n$ identity matrix. Apparently, if only $(HQP0_2)$ satisfies the Slater condition, then both  $(SP_3)$ and $(SD_3)$ satisfy the Slater condition, and Theorem \ref{theorem:main-result} can be applied  to  $(SP_3)$ and $(SD_3)$.

\begin{definition}
	Let $A_0,A_1$ and $A_2$ be three $n\times n$ real symmetric matrices. We say that Property  $\mathcal{IR}$ holds for $A_0,A_1,A_2$ if there exist three positive real numbers $\breve{\mu}_1,\breve{\mu}_2,\breve{\mu}_3$ and two vectors $\breve{x}_1,\breve{x}_2\in\mathcal{R}^n$, such that
	\begin{description}
		\item[(IR.1)] $\breve{Z}:=A_0+\breve{\mu}_1A_1+\breve{\mu}_2A_2+\breve{\mu}_3I_n\succeq0$ and $L(\breve{x}_1,\breve{x}_2)=\Null(\breve{Z})$, where $L(\breve{x}_1,\breve{x}_2)$ denotes the linear subspace spanned by the vectors $\breve{x}_1$ and $\breve{x}_2$;
		
		\item[(IR.2)] $\breve{x}_1^TA_1\breve{x}_1=\breve{x}_2^TA_1\breve{x}_2=0$ and  $\breve{x}_1^TA_1\breve{x}_2\neq0$;
		 
		\item[(IR.3)] $\breve{x}_1^TA_2\breve{x}_1<0<\breve{x}_2^TA_2\breve{x}_2$.
	\end{description}
\end{definition}
Now we give the first result of this section, which is an extension of S-lemma \cite{Yakubo1971} on three real homogeneous quadratic functions. And it improves Proposition 3.6 of \cite{Telarky2007} and Theorem 3.9 of \cite{Peng1997}. 
\begin{theorem}\label{S-lemma for three matrices}
	Let $A_0,A_1$ and $A_2$ be three $n\times n$ real symmetric matrices,  and let there be an $x_0\in\mathcal{R}^n$ such that $x_0^TA_1x_0<0$ and $x_0^TA_2x_0<0$. Then the following two statements are equivalent to each other.
	\begin{description}
		\item[{(i)}]  The system 
		\begin{equation}\label{system:A_0,A_1,A_2}
			\begin{cases}
				x^TA_0x<0,\\
				x^TA_1x\leq0,\\
				x^TA_2x\leq0,
			\end{cases}
		\end{equation}
		is not solvable and Property $\mathcal{IR}$ fails for $A_0,A_1,A_2$.
	
		\item[{(ii)}]
		There exist $\mu_1^0\geq0$ and $\mu_2^0\geq0$ such that $A_0+\mu_1^0A_1+\mu_2^0A_2\succeq0$.
	\end{description}
\end{theorem}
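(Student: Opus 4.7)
The plan is to reduce the theorem to Theorem \ref{theorem:main-result} applied to the SDP pair $(SP_3),(SD_3)$. The core observation is that $x=0$ is always feasible for $(HQP_3)$, so $v^*(HQP_3)\le 0$, and unsolvability of \eqref{system:A_0,A_1,A_2} is equivalent to $v^*(HQP_3)=0$: any witness of \eqref{system:A_0,A_1,A_2} can be rescaled into the unit ball to produce a feasible point of $(HQP_3)$ with strictly negative objective. The hypothesis $x_0^TA_1x_0<0,\,x_0^TA_2x_0<0$ is exactly the Slater condition for $(HQP0_2)$, which as noted in the text propagates to Assumption \ref{ass1} for $(SP_3),(SD_3)$, so strong SDP duality and Theorem \ref{theorem:main-result} are available.

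For $(ii)\Rightarrow(i)$, if $A_0+\mu_1^0A_1+\mu_2^0A_2\succeq 0$ with $\mu_1^0,\mu_2^0\ge 0$, then any $x$ feasible for the system satisfies $x^TA_0x\ge -\mu_1^0 x^TA_1x-\mu_2^0x^TA_2x\ge 0$, ruling out a strict inequality $x^TA_0x<0$. For the second half, suppose Property $\mathcal{IR}$ also held, with data $(\breve{Z},\breve{\mu}_1,\breve{\mu}_2,\breve{\mu}_3,\breve{x}_1,\breve{x}_2)$. For any $x\in\Null(\breve{Z})$ the Lagrangian identity
\[
x^TA_0x=-\breve{\mu}_1x^TA_1x-\breve{\mu}_2x^TA_2x-\breve{\mu}_3\|x\|^2,
\]
together with $x^T(A_0+\mu_1^0A_1+\mu_2^0A_2)x\ge 0$, yields
\[
(\mu_1^0-\breve{\mu}_1)x^TA_1x+(\mu_2^0-\breve{\mu}_2)x^TA_2x\ge\breve{\mu}_3\|x\|^2>0.
\]
Substituting $x=\breve{x}_1$ kills the $A_1$ term by (IR.2), and (IR.3) then forces $\mu_2^0<\breve{\mu}_2$; substituting $x=\breve{x}_2$ symmetrically forces $\mu_2^0>\breve{\mu}_2$, a contradiction.

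For $(i)\Rightarrow(ii)$, the assumptions give $v^*(HQP_3)=0$ and $0\in\Omega$. By Theorem \ref{theorem:main-result}, $v^*(SP_3)=v^*(HQP_3)=0$ will hold as soon as Property $\mathcal{I}$ fails for any optimal pair of $(SP_3),(SD_3)$, and it suffices to argue the contrapositive: if Property $\mathcal{I}$ held for some optimal pair $(\hat{X},\hat{Z},\hat{\mu}_1,\hat{\mu}_2,\hat{\mu}_3)$, then Property $\mathcal{IR}$ would hold. The dictionary is obtained by setting $\breve{Z}=\hat{Z}$, $\breve{\mu}_i=\hat{\mu}_i$, and reading $\breve{x}_1,\breve{x}_2$ off the rank-one decomposition in (I.4). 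Since $c_1=c_2=0$ and $c_3=1$ in $(SP_3)$, the correction terms $(c_i/c_3)A_3$ vanish in the real form of (I.4), so (I.4.1)--(I.4.2) specialize precisely to (IR.2)--(IR.3) (after possibly swapping $\hat{x}_1,\hat{x}_2$ to conform to the sign convention in (IR.3)); conditions (I.1)--(I.3) together with $\range(\hat{X})\subseteq\Null(\hat{Z})$ then give $L(\breve{x}_1,\breve{x}_2)=\Null(\breve{Z})$, which is (IR.1). Once $v^*(SP_3)=0$ is established, strong duality delivers an optimal dual solution with $\hat{\mu}_3=0$, giving $A_0+\hat{\mu}_1A_1+\hat{\mu}_2A_2=\hat{Z}\succeq 0$ with $\hat{\mu}_1,\hat{\mu}_2\ge 0$, i.e.\ statement (ii).

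The main obstacle is the sign-chasing argument used to rule out $\mathcal{IR}$ under (ii): the coefficient $\mu_2^0-\breve{\mu}_2$ must be driven into two opposite signs by probing the null-space identity with $\breve{x}_1$ versus $\breve{x}_2$, exploiting the opposite signs of $\breve{x}_k^TA_2\breve{x}_k$ guaranteed by (IR.3). The reverse direction is mechanical once the specialization $c_1=c_2=0$, $c_3=1$ collapses the cross-term conditions in Property $\mathcal{I}$ to those in Property $\mathcal{IR}$.
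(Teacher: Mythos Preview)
Your proof is correct. The direction $(i)\Rightarrow(ii)$ matches the paper's argument essentially verbatim: failure of Property~$\mathcal{IR}$ is translated into failure of Property~$\mathcal{I}$ for the optimal SDP pair (the specialization $c_1=c_2=0$, $c_3=1$ makes the two definitions coincide), and Theorem~\ref{theorem:main-result} then forces $v^*(SP_3)=v^*(HQP_3)=0$, whence $\hat\mu_3=0$ and $\hat Z=A_0+\hat\mu_1A_1+\hat\mu_2A_2\succeq0$.

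The direction $(ii)\Rightarrow(i)$, however, diverges from the paper in how Property~$\mathcal{IR}$ is ruled out. The paper argues on the primal SDP side: from (ii) it first deduces $v^*(SP_3)=0$, and then, assuming $\mathcal{IR}$ held, it builds a primal feasible matrix $\breve X=\alpha\breve x_1\breve x_1^T+\beta\breve x_2\breve x_2^T$ with $A_1\bullet\breve X=A_2\bullet\breve X=0$, $I_n\bullet\breve X=1$, and computes $A_0\bullet\breve X=-\breve\mu_3<0$, contradicting $v^*(SP_3)=0$. Your argument is purely algebraic and bypasses the SDP value altogether: you combine the null-space identity $x^TA_0x=-\breve\mu_1x^TA_1x-\breve\mu_2x^TA_2x-\breve\mu_3\|x\|^2$ with the inequality $x^T(A_0+\mu_1^0A_1+\mu_2^0A_2)x\ge0$, then plug in $\breve x_1,\breve x_2$ to force $\mu_2^0-\breve\mu_2$ to be simultaneously negative and positive. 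Your route is a bit more self-contained (it uses only the data in (ii) and the definition of $\mathcal{IR}$, not the intermediate fact $v^*(SP_3)=0$); the paper's route is more constructive and has the virtue that the same $\breve X$ computation is reused verbatim in the proof of Theorem~\ref{Yuan's lemma extend}.
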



\begin{proof}
	As $(HQP0_2)$ satisfies the Slater condition, then both  $(SP_3)$ and $(SD_3)$ satisfy the Slater condition and have optimal solutions. Let $\hat{X}$ and $(\hat{Z},\hat{\mu}_1,\hat{\mu}_2,\hat{\mu}_3)$ be any optimal solution pair to $(SP_3)$ and $(SD_3)$.
	
{\noindent\bf\Large  ``(i)$\Longrightarrow$(ii)".}\\
	 Since the system (\ref{system:A_0,A_1,A_2}) has no solution,  $x=0$  must be an optimal solution of $(HQP0_2)$, that is $v^*(HQP_3)=v^*(HQP0_2)=0.$ 
 Due to Property $\mathcal{IR}$ fails for $A_0,A_1,A_2$, the pair of $\hat{X}$ and $(\hat{Z},\hat{\mu}_1,\hat{\mu}_2,\hat{\mu}_3)$ does not satisfy Property $\mathcal{I}$. From Theorem \ref{theorem:main-result}, one has  
 $$
 -\hat{\mu}_3=v^*(SD_3)=v^*(SP_3)=v^*(HQP_3)=0,
 $$  
 which implies that $\hat{Z}=A_0+\hat{\mu}_1A_1+\hat{\mu}_2A_2\succeq0$ and then the statement (ii) is true.
	
{\noindent\bf\Large ``(ii)$\Longrightarrow$(i)".}\\
	Put $Z_0=A_0+\mu_1^0A_1+\mu_2^0A_2$ and $\mu_3^0=0$. Then  $(Z_0,\mu_1^0,\mu_2^0,\mu_3^0)$ is a feasible solution to $(SD_3)$. Therefore one has
	$$0=\mu_3^0\leq v^*(SD_3)=v^*(SP_3)\leq v^*(HQP_3)\leq0\,\Longrightarrow\,v^*(SP_3)= v^*(HQP_3)= v^*(HQP0_2)=0,$$
	which implies that the system (\ref{system:A_0,A_1,A_2}) has no solution and the pair of $\hat{X}$ and $(\hat{Z},\hat{\mu}_1,\hat{\mu}_2,\hat{\mu}_3)$ does not satisfy the Property $\mathcal{I}$. To complete the proof, we need only to show the Property $\mathcal{IR}$ fails for $A_0,A_1,A_2$. In fact, if the Property $\mathcal{IR}$ holds for $A_0,A_1,A_2$,  
	one can choose two appropriate positive real numbers $\alpha$ and $\beta$ such that
	\begin{equation}\label{eq:breveX-1}
		\breve{X}:=\alpha\breve{x}_1\breve{x}_1^T+\beta\breve{x}_2\breve{x}_2^T\succeq0,\,\,A_1\bullet\breve{X}=A_2\bullet\breve{X}=0,\,\,  I_n\bullet	\breve{X}=1.
	\end{equation}
	Thus $\breve{X}$ is a feasible solution to $(SP_3)$, and its objective value is
	\begin{equation}\label{eq:breveX-2}
		A_0\bullet\breve{X}=A_0\bullet\breve{X}-\breve{Z}\bullet\breve{X}=-\breve{\mu}_1A_1\bullet\breve{X}-\breve{\mu}_2A_2\bullet\breve{X}-\breve{\mu}_3I_n\bullet\breve{X}=-\breve{\mu}_3<0,
	\end{equation}
	which contradicts with $v^*(SP_3)=0$. Therefore the statement (i) is true.
\end{proof}

The following theorem can be regarded as an extension of  Yuan's lemma \cite{Yuan1990} on three real quadratic forms, in which the added condition is exacter and weaker than that in Theorem 5 of \cite{Yuan1999}.
\begin{theorem}\label{Yuan's lemma extend}
	 Let $A_0,A_1$ and $A_2$ be three $n\times n$ real symmetric matrices. Then the following two statements are equivalent to each other.
	\begin{description}
		\item[{(i)}] {\bf (i.1)} $max\{x^TA_0x,x^TA_1x,x^TA_2x\}\geq0\,\forall\,x\in\mathcal{R}^n.\,$ {\bf (i.2)} There exists a permutation $\{j_0,j_1,j_2\}$ of $\{0,1,2\}$ such that, the system 
			\begin{equation}\label{system:A_i0,A_i1,A_i2=0}
				\begin{cases}
					x^TA_{j_0}x<0,\\
					x^TA_{j_1}x=0,\\
					x^TA_{j_2}x=0,
				\end{cases}
			\end{equation}
			is not solvable and Property $\mathcal{IR}$ fails for $A_{j_0},A_{j_1},A_{j_2}.$ 
			
		\item[{(ii)}]
		There exist $\mu^0_k\geq0$ $(k=0,1,2)$ and $\sum\limits_{k=0}^{2}\mu^0_k=1$ such that $\mu^0_0A_0+\mu^0_1A_1+\mu^0_2A_2\succeq0$.
	\end{description}
\end{theorem}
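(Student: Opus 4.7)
The plan is to prove the two directions of the equivalence separately, exploiting Theorem \ref{S-lemma for three matrices} and the classical two-matrix Yuan lemma in the forward direction, and mimicking the $\breve{X}$-construction from the proof of Theorem \ref{S-lemma for three matrices} in the reverse direction. Throughout write $q_k(x):=x^TA_kx$.

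For $(ii)\Rightarrow(i)$: part (i.1) is immediate, since $\sum_k\mu_k^0 q_k(x)\ge 0$ forces $\max_k q_k(x)\ge 0$ whenever $\mu_k^0\ge0$ and $\sum_k\mu_k^0=1$. For (i.2) I would pick $j_0$ to be any index with $\mu_{j_0}^0>0$ (one exists since the weights sum to one). The equality system in (\ref{system:A_i0,A_i1,A_i2=0}) is then infeasible, for any $x$ satisfying it would give $\bigl(\sum_k\mu_k^0 A_k\bigr)\bullet xx^T=\mu_{j_0}^0 q_{j_0}(x)<0$, contradicting the positive semidefiniteness supplied by (ii). To see that Property $\mathcal{IR}$ also fails for $A_{j_0},A_{j_1},A_{j_2}$, I argue by contradiction: assuming witnesses $\breve{\mu}_1,\breve{\mu}_2,\breve{\mu}_3>0$ and $\breve{x}_1,\breve{x}_2$ exist, I choose positive scalars $\alpha,\beta$ (possible by the opposite signs in (IR.3)) so that $\breve{X}:=\alpha\breve{x}_1\breve{x}_1^T+\beta\breve{x}_2\breve{x}_2^T\succeq 0$ satisfies $A_{j_2}\bullet\breve{X}=0$. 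Condition (IR.2) already gives $A_{j_1}\bullet\breve{X}=0$, and the null-space identity $\breve{x}_i^T\breve{Z}\breve{x}_i=0$ yields $A_{j_0}\bullet\breve{X}=-\breve{\mu}_3\tr(\breve{X})<0$, exactly as in (\ref{eq:breveX-2}). Therefore $\bigl(\sum_k\mu_k^0 A_k\bigr)\bullet\breve{X}=\mu_{j_0}^0\,A_{j_0}\bullet\breve{X}<0$, contradicting (ii).

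For $(i)\Rightarrow(ii)$: after relabelling, assume the permutation in (i.2) is the identity. The argument splits according to whether the pair $(A_1,A_2)$ admits a Slater point, i.e., whether some $x_0$ satisfies $q_1(x_0)<0$ and $q_2(x_0)<0$. If Slater fails, then $\max\{q_1(x),q_2(x)\}\ge 0$ for every $x$, and the classical two-matrix Yuan lemma supplies $\nu_1,\nu_2\ge 0$ with $\nu_1+\nu_2=1$ and $\nu_1 A_1+\nu_2 A_2\succeq 0$; (ii) follows with $\mu_0^0=0$. If Slater holds, I aim to apply Theorem \ref{S-lemma for three matrices} to $A_0,A_1,A_2$; failure of $\mathcal{IR}$ is given by (i.2), so it remains to verify unsolvability of the inequality system $q_0<0,\ q_1\le 0,\ q_2\le 0$. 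Suppose otherwise, with witness $x^*$. By (i.1) together with $q_0(x^*)<0$, at least one of $q_1(x^*),q_2(x^*)$ must vanish, and (i.2) rules out both vanishing simultaneously; WLOG $q_1(x^*)=0$ and $q_2(x^*)<0$. A perturbation $x(t)=x^*+tv$ preserves $q_0,q_2<0$ for small $t$, so to avoid contradicting (i.1) one would need $q_1(x(t))\ge 0$ for all small $t$; taking $v=-A_1 x^*$ when $A_1 x^*\neq 0$ makes the linear coefficient in $t$ strictly negative, while in the case $A_1 x^*=0$ any direction $v$ with $v^TA_1 v<0$ makes the quadratic coefficient negative. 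The only way both obstructions are blocked is $A_1\succeq 0$, which contradicts the Slater hypothesis. Hence the inequality system is unsolvable, Theorem \ref{S-lemma for three matrices} delivers $\mu_1,\mu_2\ge 0$ with $A_0+\mu_1 A_1+\mu_2 A_2\succeq 0$, and normalising by $1+\mu_1+\mu_2$ completes (ii).

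The main obstacle I expect is precisely this perturbation step in $(i)\Rightarrow(ii)$: the hypothesis in (i.2) only forbids the stronger equality system, whereas Theorem \ref{S-lemma for three matrices} demands infeasibility of the weaker inequality system, so the gap has to be closed using (i.1) plus a careful first- and second-order analysis at $x^*$ under the Slater assumption, together with a clean branching to the two-matrix Yuan lemma in the non-Slater case.
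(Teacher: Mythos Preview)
Your proposal is correct and follows essentially the same architecture as the paper: the $(ii)\Rightarrow(i)$ direction via the $\breve{X}$-construction, and the $(i)\Rightarrow(ii)$ direction via the Slater/non-Slater split, invoking two-matrix Yuan in the latter case and Theorem~\ref{S-lemma for three matrices} in the former after upgrading infeasibility of the equality system to infeasibility of the inequality system.

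The one place you diverge is the perturbation step, and the paper's version is cleaner than yours: rather than splitting on whether $A_1x^*=0$ and then hunting for a negative-curvature direction of $A_1$, the paper simply perturbs along the Slater point itself, considering $x^*\pm\epsilon x_0$. Since $q_0(x^*)<0$ and $q_2(x^*)<0$ persist for small $\epsilon$, and $q_1(x^*\pm\epsilon x_0)=\pm 2\epsilon\, x^{*T}A_1x_0+\epsilon^2 x_0^TA_1x_0$ with $x_0^TA_1x_0<0$, either the linear term is nonzero (so one sign of $\epsilon$ wins) or it vanishes and the quadratic term is automatically negative. This uses the Slater witness $x_0$ directly and avoids your case analysis on $A_1x^*$ and $A_1\succeq 0$; both arguments are valid, but the paper's is a one-liner.
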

\begin{proof}
	{\bf\Large ``(ii)$\Longrightarrow$(i)".}\\
	As the sum of three nonnegative numbers $\mu^0_0,\mu^0_1,\mu^0_2$ is equal to $1$,  without loss of generality, we assume $\mu^0_0>0$. Due to $\mu^0_0>0$ and $\mu^0_0A_0+\mu^0_1A_1+\mu^0_2A_2\succeq0$,  one can easily obtain that 
	$$
	max\{x^TA_0x,x^TA_1x,x^TA_2x\}\geq0\,\forall\,x\in\mathcal{R}^n
	$$ 
	and the system 
	\begin{equation*}\label{system:A_0,A_1,A_2=0}
			\begin{cases}
				x^TA_{0}x<0,\\
				x^TA_{1}x=0,\\
				x^TA_{2}x=0,
			\end{cases}
	\end{equation*}
	is not solvable. Then we need only to prove Property $\mathcal{IR}$ fails for $A_{0},A_{1},A_{2}$.  In fact, if Property $\mathcal{IR}$ holds for $A_{0},A_{1},A_{2}$,  then following the proof of Theorem \ref{S-lemma for three matrices}, one can obtain a positive semidefinite matrix $\breve{X}$ that satisfies \eqref{eq:breveX-1} and \eqref{eq:breveX-2}. Then the inner product of  $\breve{X}$ and $\mu^0_0A_0+\mu^0_1A_1+\mu^0_2A_2$ becomes
		$$
		0\leq(\mu^0_0A_0+\mu^0_1A_1+\mu^0_2A_2)\bullet \breve{X}=\mu^0_0A_0 \bullet \breve{X}=-\mu^0_0\breve\mu_3<0,
		$$
		which is a contradiction. Therefore the statement (i) is true.

	\noindent``{\bf\Large (i)$\Longrightarrow$(ii)}".\\
	Without loss of generality, we assume $(j_0,j_1,j_2)=(0,1,2)$. If $max\{x^TA_1x,x^TA_2x\}\geq0$, by Yuan's lemma (Lemma 2.3 of \cite{Yuan1990}), the statement (ii) is obviously true. So we assume that there exists a vector $x_0\in\mathcal{R}^n$ that satisfies 
	\begin{equation}\label{eq:xzeroA1A2}
		x_0^TA_1x_0<0 \text{ and } x_0^TA_2x_0<0.
	\end{equation}
	Then we assert that the system (\ref{system:A_0,A_1,A_2}) is not solvable because, if some vector $ x_1\in\mathcal{R}^n$ is feasible for the system (\ref{system:A_0,A_1,A_2}), then  $x_1$ must satisfy
	\begin{equation}\label{eq:xoneA1A2}
		x_1^TA_0x_1<0,\, x_1^TA_1x_1<0,\, x_1^TA_2x_1=0\,\, \text{ or }\,x_1^TA_0x_1<0,\ x_1^TA_1x_1=0,\ x_1^TA_2x_1<0,
	\end{equation}
	due to $max\{x^TA_0x,x^TA_1x,x^TA_2x\}\geq0$ and the system (\ref{system:A_i0,A_i1,A_i2=0}) is not solvable. Let $\epsilon$ be a sufficiently small positive real number. Combining \eqref{eq:xoneA1A2} and \eqref{eq:xzeroA1A2}, one obtains either
	$$
	(x_1+\epsilon x_0)^TA_k(x_1+\epsilon x_0)<0\,\, (\forall k=0,1,2) \text{ or } (x_1-\epsilon x_0)^TA_k(x_1-\epsilon x_0)<0\,\, (\forall k=0,1,2), 
	$$
	which contradicts with $max\{x^TA_0x,x^TA_1x,x^TA_2x\}\geq0\,\forall\,x\in\mathcal{R}^n$. Hence the system (\ref{system:A_0,A_1,A_2}) is not solvable. Then by Theorem \ref{S-lemma for three matrices}, there must exist $\mu^0_1\geq0$ and $\mu^0_2\geq0$ such that $A_0+\mu^0_1A_1+\mu^0_2A_2\succeq0$, that is the statement (ii) is true.		
\end{proof}

Similarly, by using Theorem \ref{theorem:main-result}, on can generalize S-lemma and Yuan's lemma to four complex quadratic forms. Since their proof processes are very similar to those of the real-valued results, we state only the corresponding results but omit their proof processes. 

\begin{definition}
	Let $A_0,A_1,A_2,A_3$ be four $n\times n$ Hermitian matrices. We say that Property $\mathcal{IC}$ holds for $A_0,A_1,A_2,A_3$ if there exist four positive real numbers $\breve{\mu}_1,\breve{\mu}_2,\breve{\mu}_3,\breve{\mu}_4$ and two vectors $\breve{x}_1,\breve{x}_2\in\mathcal{C}^n$, such that
	\begin{description}
		\item[(IC.1)] $\breve{Z}:=A_0+\breve{\mu}_1A_1+\breve{\mu}_2A_2+\breve{\mu}_3A_3+\breve{\mu}_4I_n\succeq0$ and $L(\breve{x}_1,\breve{x}_2)=\Null(\breve{Z})$;
		
		\item[(IC.2)] $\breve{x}_1^HA_1\breve{x}_1=\breve{x}_2^HA_1\breve{x}_2=0$, $\re(\breve{x}_1^HA_1\breve{x}_2)\neq0$;
		
		\item[(IC.3)] $\breve{x}_1^HA_2\breve{x}_1=\breve{x}_2^HA_2\breve{x}_2=0$, $\re(\breve{x}_1^HA_2\breve{x}_2)=0$, $\im(\breve{x}_1^HA_2\breve{x}_2)\neq0$; 
		
		\item[(IC.4)] $\breve{x}_1^HA_3\breve{x}_1<0< \breve{x}_2^HA_3\breve{x}_2$.
	\end{description}
\end{definition}
\begin{theorem}
	Let $A_0,A_1,A_2,A_3$ be four $n\times n$ Hermitian matrices, and let there be one vector $x_0\in\mathcal{C}^n$ such that $x_0^HA_kx_0<0$ for all $k=1,2,3$. Then the following two statements are equivalent to each other.
	\begin{description}
		\item[{(i)}]  The system 
		\begin{equation}
			\begin{cases}
				x^HA_0x<0,\\
				x^HA_1x\leq0,\\
				x^HA_2x\leq0,\\
				x^HA_3x\leq0,
			\end{cases}
		\end{equation}
		is not solvable, and Property $\mathcal{IC}$ fails for $A_0,A_1,A_2,A_3$.
		\item[{(ii)}]
		There exist $\mu^0_k\geq0$ $(k=1,2,3)$ such that $A_0+\mu^0_1A_1+\mu^0_2A_2+\mu^0_3A_3\succeq0$.
	\end{description}
\end{theorem}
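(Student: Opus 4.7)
The plan is to mirror the proofs of Theorems 4.2 and 4.3 by embedding the statement into an SDP relaxation of a bounded homogeneous complex quadratic program and invoking Theorem \ref{theorem:main-result}. Introduce the auxiliary problem $(HQP0_3)$: minimize $x^HA_0x$ subject to $x^HA_kx\leq 0$ for $k=1,2,3$, and its compact version $(HQP_4)$ obtained by adding $x^Hx\leq 1$. As in the real case, $x=0$ is feasible for both, $v^*(HQP0_3)\in\{0,-\infty\}$, and the system in (i) is unsolvable if and only if $v^*(HQP_4)=0$. The associated SDP $(SP_4)$ has $c_1=c_2=c_3=0$ and $c_4=1\neq0$, fitting the framework of $(SP_{m_{\mathcal{F}}+2})$ with $\mathcal{F}=\mathcal{C}$, $m_{\mathcal{F}}=2$, and $A_{m_{\mathcal{F}}+2}=I_n$. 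The existence of $x_0$ with $x_0^HA_kx_0<0$ for $k=1,2,3$ yields primal Slater (via a small positive combination of $x_0x_0^H$ and $I_n$), while dual Slater follows by taking a sufficiently large coefficient on $I_n$; hence Assumption \ref{ass1} holds.

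Next, I would observe that, because $c_1=c_2=c_3=0$, the shifts $(c_i/c_{m_{\mathcal{F}}+2})A_{m_{\mathcal{F}}+2}$ in Definition \ref{I-definition} vanish for $i=1,m_{\mathcal{F}},m_{\mathcal{F}}+1$, so Property $\mathcal{I}$ for this specific instance reads exactly as Property $\mathcal{IC}$: condition (IC.1) encodes (I.1) (all four dual multipliers are nonzero because all constraints are inequalities), (I.2) ($\rank(\breve{Z})=n-2$), and (I.3) ($\rank(\breve{X})=2$, using the linear independence forced by (IC.2)); (IC.2) is (I.4.1); (IC.3) is (I.4.2); and (IC.4) is (I.4.3) after writing $\breve{x}_j^HA_3\breve{x}_j=A_3\bullet\breve{x}_j\breve{x}_j^H$ and noting the product of oppositely-signed quantities is negative.

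For the direction (ii)$\Longrightarrow$(i), the point $(Z_0,\mu_1^0,\mu_2^0,\mu_3^0,0)$ with $Z_0=A_0+\sum_{k=1}^{3}\mu_k^0A_k$ is feasible for $(SD_4)$ with value $0$, while $X=0$ is feasible for $(SP_4)$ with value $0$; hence $v^*(SP_4)=v^*(HQP_4)=0$ and the system in (i) has no solution. If Property $\mathcal{IC}$ held, then for suitable positive $\alpha,\beta$ with $\tr(\alpha\breve{x}_1\breve{x}_1^H+\beta\breve{x}_2\breve{x}_2^H)=1$, the matrix $\breve{X}=\alpha\breve{x}_1\breve{x}_1^H+\beta\breve{x}_2\breve{x}_2^H$ would be feasible for $(SP_4)$ with $A_0\bullet\breve{X}=\breve{Z}\bullet\breve{X}-\sum\breve{\mu}_kA_k\bullet\breve{X}-\breve{\mu}_4\,I_n\bullet\breve{X}=-\breve{\mu}_4<0$, contradicting $v^*(SP_4)=0$. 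For the direction (i)$\Longrightarrow$(ii), unsolvability of the system gives $v^*(HQP_4)=0$, and the failure of Property $\mathcal{IC}$ (equivalently Property $\mathcal{I}$) lets Theorem \ref{theorem:main-result} conclude $v^*(SP_4)=v^*(HQP_4)=0$; strong duality then forces $-\hat{\mu}_4=0$, so $\hat{Z}=A_0+\sum_{k=1}^{3}\hat{\mu}_k A_k\succeq 0$ with $\hat{\mu}_k\geq 0$, establishing (ii).

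The main delicacy is the careful identification of Property $\mathcal{IC}$ with Property $\mathcal{I}$ specialized to the vanishing right-hand sides $c_1=c_2=c_3=0$ and to $A_{m_{\mathcal{F}}+2}=I_n$; once this dictionary is confirmed, everything else is a direct complex analog of Theorems \ref{S-lemma for three matrices} and \ref{Yuan's lemma extend}.
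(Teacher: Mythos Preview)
Your proposal is correct and follows essentially the same route as the paper: the paper explicitly omits the proof of this theorem, stating that it is ``very similar'' to that of Theorem~\ref{S-lemma for three matrices}, and your argument is precisely the complex analog of that proof---embedding into $(HQP_4)$ with $A_4=I_n$ and $c_4=1$, verifying Assumption~\ref{ass1}, identifying Property~$\mathcal{IC}$ with the specialization of Property~$\mathcal{I}$, and then running the two directions exactly as in the real case via Theorem~\ref{theorem:main-result}. The one point worth making explicit (you hint at it but do not fully spell it out) is that the identification is really an implication rather than an equivalence: Property~$\mathcal{I}$ for \emph{some} optimal pair implies Property~$\mathcal{IC}$ (so failure of $\mathcal{IC}$ forces failure of $\mathcal{I}$ for every optimal pair, which is what Theorem~\ref{theorem:main-result} needs), and this is exactly how the paper uses it in the real case as well.
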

\begin{theorem}
	Let $A_0,A_1,A_2,A_3$ be four $n\times n$ Hermitian matrices. Then the following two statements are equivalent to each other.
	\begin{description}
		\item[{(i)}] {\bf (i.1)} $max\{x^HA_0x,x^HA_1x,x^HA_2x,x^HA_3x\}\geq0\,\forall\,x\in\mathcal{C}^n.$\, {\bf (i.2)}  There exists a permutation $\{j_0,j_1,j_2,j_3\}$ of $\{0,1,2,3\}$ such that the system 
		\begin{equation}
			\begin{cases}
				x^HA_{j_0}x<0,\\
				x^HA_{j_1}x=0,\\
				x^HA_{j_2}x=0,\\
				x^HA_{j_3}x=0,
			\end{cases}
		\end{equation}
		is not solvable and Property $\mathcal{IC}$ fails for $A_{j_0},A_{j_1},A_{j_2},A_{j_3}.$
		
		\item[{(ii)}]
		There exist $\mu^0_k\geq0$ $(k=0,1,2,3)$ and $\sum\limits_{k=0}^{3}\mu^0_k=1$ such that $\mu^0_0A_0+\mu^0_1A_1+\mu^0_2A_2+\mu^0_3A_3\succeq0$.
	\end{description}
\end{theorem}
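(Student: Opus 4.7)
The plan is to mirror the proof of Theorem \ref{Yuan's lemma extend} line for line, with the preceding complex S-lemma extension playing the role that Theorem \ref{S-lemma for three matrices} played in the real case. Both directions reduce to applying an appropriate SDP duality argument or an appropriate version of S-lemma, and the structural role of Property $\mathcal{IC}$ (with its extra imaginary-part clause \textbf{(IC.3)}) is identical to that of Property $\mathcal{IR}$.

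For the direction $(ii)\Longrightarrow(i)$, without loss of generality normalize so that $\mu_0^0>0$. The PSD condition $\mu_0^0A_0+\mu_1^0A_1+\mu_2^0A_2+\mu_3^0A_3\succeq0$ immediately yields $\max\{x^HA_0x,x^HA_1x,x^HA_2x,x^HA_3x\}\geq 0$ for every $x\in\mathcal{C}^n$, which is \textbf{(i.1)}. Taking the permutation $(j_0,j_1,j_2,j_3)=(0,1,2,3)$, the system in \textbf{(i.2)} cannot be solved for the same reason. To rule out Property $\mathcal{IC}$ for $A_0,A_1,A_2,A_3$, I would argue by contradiction: if Property $\mathcal{IC}$ held with data $(\breve{\mu}_i,\breve{x}_j,\breve{Z})$, then exactly as in the display $\eqref{eq:breveX-1}$ one can choose positive scalars $\alpha,\beta$ and build $\breve{X}=\alpha\breve{x}_1\breve{x}_1^H+\beta\breve{x}_2\breve{x}_2^H\succeq 0$ satisfying $A_1\bullet\breve{X}=A_2\bullet\breve{X}=A_3\bullet\breve{X}=0$. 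Here, checking $A_2\bullet\breve{X}=0$ is where the new clause \textbf{(IC.3)} is used: because $\re(\breve{x}_1^HA_2\breve{x}_2)=0$, the cross-terms vanish automatically, and the diagonal terms vanish by the first part of \textbf{(IC.3)}. Pairing $\breve{X}$ with the inequality $(\mu_0^0A_0+\mu_1^0A_1+\mu_2^0A_2+\mu_3^0A_3)\bullet\breve{X}\geq 0$ and using $\breve{Z}\bullet\breve{X}=0$ then forces $-\mu_0^0\breve{\mu}_4\geq 0$, a contradiction.

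For the direction $(i)\Longrightarrow(ii)$, assume WLOG $(j_0,j_1,j_2,j_3)=(0,1,2,3)$. Split into two cases. If $\max\{x^HA_1x,x^HA_2x,x^HA_3x\}\geq 0$ for all $x\in\mathcal{C}^n$, then a complex three-form version of Yuan's lemma (obtained from the same line of results that give Yuan's lemma for real forms) gives the desired nonnegative combination supported on $\{A_1,A_2,A_3\}$ alone, after which setting $\mu_0^0=0$ completes the proof. Otherwise there is $x_0\in\mathcal{C}^n$ with $x_0^HA_kx_0<0$ for $k=1,2,3$, which is the Slater-type hypothesis required by the preceding complex S-lemma extension. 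I would then show the system $x^HA_0x<0,\ x^HA_1x\leq 0,\ x^HA_2x\leq 0,\ x^HA_3x\leq 0$ is unsolvable: if some $x_1$ solved it, the unsolvability assumption in \textbf{(i.2)} forces $x_1^HA_kx_1=0$ for at least one $k\in\{1,2,3\}$ and $<0$ for the others; perturbing $x_1\pm\epsilon x_0$ with $\epsilon>0$ small then produces a vector making all four forms strictly negative, contradicting \textbf{(i.1)}. With this unsolvability and the failure of Property $\mathcal{IC}$ in hand, the preceding complex S-lemma theorem directly yields $\mu_1^0,\mu_2^0,\mu_3^0\geq 0$ with $A_0+\mu_1^0A_1+\mu_2^0A_2+\mu_3^0A_3\succeq 0$, and normalizing to sum to $1$ after appending $\mu_0^0=1$ (then rescaling) gives \textbf{(ii)}.

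The main obstacle is the bookkeeping associated with the extra clause \textbf{(IC.3)} in Property $\mathcal{IC}$: in the real case only the real part of the cross term appears, but in the complex case one must separately track the vanishing of $\re(\breve{x}_1^HA_2\breve{x}_2)$ while also exploiting the non-vanishing of the imaginary part. This is handled cleanly once one observes that the rank-two construction $\alpha\breve{x}_1\breve{x}_1^H+\beta\breve{x}_2\breve{x}_2^H$ only sees the real part of cross terms of Hermitian sandwiches, so \textbf{(IC.3)} is precisely the condition that makes $A_2\bullet\breve{X}=0$ automatic. Once this is noted, no new technical ingredient beyond the preceding S-lemma extension is required, and the argument closes exactly as in the proof of Theorem \ref{Yuan's lemma extend}.
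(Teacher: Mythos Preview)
Your overall strategy is exactly what the paper intends: it explicitly omits the proof and says to mirror the real case (Theorem \ref{Yuan's lemma extend}) using the complex S-lemma extension in place of Theorem \ref{S-lemma for three matrices}. The $(ii)\Rightarrow(i)$ direction is fine, though your remark about ``cross terms vanishing'' is a red herring: in $\breve{X}=\alpha\breve{x}_1\breve{x}_1^H+\beta\breve{x}_2\breve{x}_2^H$ there are no cross terms at all, and $A_k\bullet\breve{X}=0$ for $k=1,2$ follows directly from the diagonal parts of \textbf{(IC.2)} and \textbf{(IC.3)}.

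There is, however, a genuine (if small) gap in your $(i)\Rightarrow(ii)$ perturbation step. In the real case the unsolvability of the system in \textbf{(i.2)} forces \emph{exactly one} of $x_1^TA_1x_1,x_1^TA_2x_1$ to vanish, so a single sign choice $x_1\pm\epsilon x_0$ handles it. In the complex case with three inequality constraints, the unsolvability of $\{A_0<0,\,A_1=A_2=A_3=0\}$ only rules out \emph{all three} vanishing; it is perfectly possible that, say, $x_1^HA_1x_1=x_1^HA_2x_1=0$ while $x_1^HA_3x_1<0$. Your claim ``$=0$ for at least one $k$ and $<0$ for the others'' is therefore not justified, and the real perturbation $x_1\pm\epsilon x_0$ can fail: if $\re(x_0^HA_1x_1)$ and $\re(x_0^HA_2x_1)$ have opposite signs, neither sign of a real $\epsilon$ makes both quadratic forms strictly negative. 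The fix is to exploit the extra degree of freedom in $\mathcal{C}$: take $\epsilon=re^{i\theta}$ with small $r>0$, and choose $\theta$ so that $\re\!\big(e^{-i\theta}x_0^HA_jx_1\big)\le 0$ for each $j$ in the vanishing set (two closed half-circles in $S^1$ always intersect); on the boundary where a linear term vanishes, the second-order term $r^2x_0^HA_jx_0<0$ takes over. With this adjustment your argument closes exactly as intended.
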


\section{Testing Property $\mathcal{I}$ numerically}\label{sect:num}
In this section, we shall perform numerical experiments. Consider the following model over the complex number field:
\begin{equation*}
	\begin{array}{lll}
		(HQP_4)&\mbox{minimize}   &x^HA_0x\\
		&\mbox{subject to} &x^HA_1x\leq c_1,\\
		& &x^HA_2x\leq c_2,\\
		& &x^HA_3x\leq c_3,\\
		& &x^HA_4x\leq c_4\,(c_4\neq0),\\
	\end{array}
\end{equation*}
where $A_0,\,A_i$ are $n\times n$ complex Hermitian matrices and $c_i\in\mathcal{R}$, $i=1,2,3,4$.
As SDP solvers can only return approximate optimal solutions within a tolerance, we can only verify whether $(HQP_4)$ problem satisfies Property $\mathcal{I}$ within an error. The specific operations are as follows:

Let $\hat{X}$ and $(\hat{Z},\hat{\mu}_1,\hat{\mu}_2,\hat{\mu}_3,\hat{\mu}_4)$ be a pair of approximate optimal solutions to $(SP_4)$ and $(SD_4)$ respectively, which are returned from an SDP solver within an error $\varepsilon_1$. 
 Then,  we perform an eigenvalue decomposition for $\hat{X}$ and $\hat{Z}$:
\begin{equation*}
	\hat{X}=Q_1^H\Lambda_1Q_1 \text{ and } \hat{Z}=Q_2^H\Lambda_2Q_2,
\end{equation*}
where $Q_i$ is orthonormal and $\Lambda_i=diag(\lambda_{i1},\lambda_{i2},\cdots,\lambda_{in})$, with $\lambda_{ij}\geq0$, $j=1,2,\cdots,n$; $i=1,2$. Put
\begin{equation*}
	\hat{\lambda}_{ij}:=\begin{cases}
		\lambda_{ij}, \qquad&\text{ if } \lambda_{ij}>\varepsilon_2,\\
		0, \qquad&\text{ if } \lambda_{ij}\leq\varepsilon_2,
	\end{cases}
\text{ for all } j=1,2,\cdots,n; i=1,2.
\end{equation*}
 Let us purify the solutions by using 
\begin{equation*}
	X^*:=Q_1^Hdiag(\hat{\lambda}_{11},\hat{\lambda}_{12},\cdots,\hat{\lambda}_{1n})Q_1\text{ and }Z^*:=Q_2^Hdiag(\hat{\lambda}_{21},\hat{\lambda}_{22},\cdots,\hat{\lambda}_{2n})Q_2
\end{equation*}
instead of $\hat{X}$ and $\hat{Z}$, while keeping $\mu_i^*=\hat{\mu}_i$, $i=1,2,3,4$. We call $X^*$ and $(Z^*,\mu_1^*,\mu_2^*,\mu_3^*,\mu_4^*)$ to be a pair of purified $(\varepsilon_1,\varepsilon_2)$-approximate optimal solutions. Then we redefine the Property $\mathcal{I}$ in the numerical sense.
\begin{definition}
	For $X^*$ and $(Z^*,\mu_1^*,\mu_2^*,\mu_3^*,\mu_4^*)$, a given pair of optimal solutions for $(SP_4)$ and $(SD_4)$ respectively, we say that this pair has Property $\mathcal{I}(\varepsilon_2)$ if the following conditions are simultaneously satisfied:
	\begin{description}
		\item[(I.1)]$\mu_i^*>\varepsilon_2$, $i=1,2,3,4$;

		\item[(I.2)]  $\rank(Z^*,\varepsilon_2)=n-2$;
		
		\item[(I.3)]  $\rank(X^*,\varepsilon_2)=2$;
		
		\item[(I.4)]  there is a rank-one decomposition of $X^*$, $X^*=x_1^*{x_1^*}^H+x_2^*{x_2^*}^H$, such that 
		\begin{description}
			\item[(I.4.1)]
			$\left|\left(A_1-\dfrac{c_1}{c_4}A_4\right)\bullet x_1^*{x_1^*}^H\right|\leq\varepsilon_2,\, \left|\left(A_1-\dfrac{c_1}{c_4}A_4\right)\bullet x_2^*{x_2^*}^H\right|\leq\varepsilon_2,$
			
			$\quad\,\,\left|\re\left( {x_1^*}^H\left(A_1-\dfrac{c_1}{c_4}A_4\right)x_2^*\right)\right|>\varepsilon_2;$
			
			\item[(I.4.2)] 
			$\left|\left(A_2-\dfrac{c_2}{c_4}A_4\right)\bullet x_1^*{x_1^*}^H\right|\leq\varepsilon_2,\, \left|\left(A_2-\dfrac{c_2}{c_4}A_4\right)\bullet x_2^*{x_2^*}^H\right|\leq\varepsilon_2,$
			
			$\quad\,\,\left|\re\left( {x_1^*}^H\left(A_2-\dfrac{c_2}{c_4}A_4\right)x_2^*\right)\right|\leq\varepsilon_2,$
			
			$\quad\,\,\left|\im\left( {x_1^*}^H\left(A_2-\dfrac{c_2}{c_4}A_4\right)x_2^*\right)\right|>\varepsilon_2;$
			
			\item[(I.4.3)] 
			$\left(\left(A_3-\dfrac{c_3}{c_4}A_4\right)\bullet x_1^*{x_1^*}^H\right)\left(\left(A_3-\dfrac{c_3}{c_4}
			A_4\right)\bullet x_2^*{x_2^*}^H\right)<-\varepsilon_2^2.$
		\end{description}
	\end{description}
\end{definition}
We shall use MATLAB and CVX for numerical experiments. Throughout our tests, let $\varepsilon_1$ be the default precision of CVX and $\varepsilon_2=1.0e-04$. For any given positive integer $n$, we generate the matrices $A_i$ $(i=0,1,2,3)$ in $\mathcal{C}^{n\times n}$, for which the real and imaginary parts of all the entries are uniformly distributed in $[-10,10]$. Then redefine 
$$
A_i:=(A_i+A_i^H)/2,\ i=0,1,2,3.
$$ 
Furthermore, in order to ensure that Assumption \ref{ass1} (ii) is met,  we need to generate a special $A_4$. Our approach is to randomly generate a positive definite matrix $Z$, for which the real and imaginary parts of all the entries are uniformly distributed in $[-40,40]$. Then denote $A_4:=Z-A_0-A_1-A_2-A_3.$ Finally, we randomly generate $c_i$ ($i=1,2,3,4$) such that Assumption \ref{ass1} (i) holds.

By the above method, we generate $1000$ instances about the model $(HQP_4)$  for all the dimension numbers $n=2,3,\cdots,10$. And the numbers of those instances, whose Property $\mathcal{I}(\varepsilon_2)$ fails, are presented in Tables \ref{tab:1}. From Tables \ref{tab:1}, one can observe that, most of the instances violates the Property $\mathcal{I}(\varepsilon_2)$, and so optimal solutions can be found to the corresponding original problems,  which highlights the numerical effectiveness of Theorem \ref{theorem:main-result}.

\begin{table}[htbp]
	\centering
	\caption{ The numbers $k$ of those instances whose Property $\mathcal{I}(\varepsilon_2)$ fails}
	\label{tab:1}
	\begin{threeparttable}
		\begin{tabular}{l|lllllllll}
			\noalign{\smallskip}
			\toprule
			$n$ &2&3&4&5&6&7&8&9&10 \\
			\midrule
			$k$ &958&944&932&909&933&946&944&932&950\\
			\bottomrule
		\end{tabular}
	\end{threeparttable}
\end{table}

Finally, we show an instance generated by our method, which has a positive optimal-value gap between the original problem and its SDP relaxation.
\begin{example}
Let
	$$A_0=\left[\begin{array}{cc}
		-7&-4\boldsymbol{i}\\
		4\boldsymbol{i}&-6
	\end{array}\right], A_1=\left[\begin{array}{cc}
		9&8-10\boldsymbol{i}\\
		8+10\boldsymbol{i}&18
	\end{array}\right], A_2=\left[\begin{array}{cc}
		6&7+3\boldsymbol{i}\\
		7-3\boldsymbol{i}&0
	\end{array}\right],$$ $$A_3=\left[\begin{array}{cc}
		3&-5+6\boldsymbol{i}\\
		-5-6\boldsymbol{i}&7
	\end{array}\right], A_4=\left[\begin{array}{cc}
		7&4\boldsymbol{i}\\
		-4\boldsymbol{i}&-8
	\end{array}\right], c_1=6,\, c_2=7,\, c_3=9,\, c_4=4.$$
 For the corresponding SDP relaxation, the purified $(\varepsilon_1,\varepsilon_2)$-approximate optimal solutions are as follows:\\
	\begin{equation*}
		\begin{aligned}
			X^*&\approx\left[\begin{array}{lc}
				0.5939&0.0700+0.4095\boldsymbol{i}\\
				0.0700-0.4095\boldsymbol{i}&0.4292
			\end{array}\right],\\
			Z^*&\approx \left[\begin{array}{cc}
				0&0\\
				0&0
			\end{array}\right],\\
			\mu_1^*&\approx0.1821,\quad \mu_2^*\approx0.2708,\quad \mu_3^*\approx0.6705,\quad \mu_4^*\approx0.2464.
		\end{aligned}
	\end{equation*}
 And $v^*(SP_4)= v^*(SD_4)\approx-10.0083$, $v^*(HQP_4)\approx-9.6974$ attained at  $x^*\approx[-0.1308-0.7089\boldsymbol{i},-0.6379]^T$.  One can verify that it has Property $\mathcal{I}(\varepsilon_2)$ indeed.  Firstly, $\mu_1^*\mu_2^*\mu_3^*\mu_4^*\neq0$, $\rank(Z^*,\varepsilon_2)=0$ and $\rank(X^*,\varepsilon_2)=2$. Secondly, $X^*$ has a rank-one decomposition, $X^*=x_1^*{x_1^*}^H+x_2^*{x_2^*}^H$, such that
	\begin{equation*}
		x_1^*\approx\left[\begin{array}{c}
			0.4256-0.1259\boldsymbol{i}\\
			-0.3083-0.4456\boldsymbol{i}
		\end{array}\right]\text{ and } x_2^*\approx\left[\begin{array}{c}
			0.3875-0.4968\boldsymbol{i}\\
			-0.0849-0.3583\boldsymbol{i}
		\end{array}\right];
	\end{equation*}
	$$\left|\left(A_1-\dfrac{c_1}{c_4}A_4\right)\bullet x_1^*{x_1^*}^H\right|\approx 1.7944e-10\leq\varepsilon_2,\, \left|\left(A_1-\dfrac{c_1}{c_4}A_4\right)\bullet x_2^*{x_2^*}^H\right|\approx 1.7945e-10\leq\varepsilon_2,$$
	$$\left|\re\left( {x_1^*}^H\left(A_1-\dfrac{c_1}{c_4}A_4\right)x_2^*\right)\right|\approx 1.7032>\varepsilon_2;$$
	$$\left|\left(A_2-\dfrac{c_2}{c_4}A_4\right)\bullet x_1^*{x_1^*}^H\right|\approx 1.5521e-09\leq\varepsilon_2,\, \left|\left(A_2-\dfrac{c_2}{c_4}A_4\right)\bullet x_2^*{x_2^*}^H\right|\approx 1.5521e-09\leq\varepsilon_2,$$
	$$\left|\re\left( {x_1^*}^H\left(A_2-\dfrac{c_2}{c_4}A_4\right)x_2^*\right)\right|\approx 4.4409e-16\leq\varepsilon_2,\,
	\left|\im\left( {x_1^*}^H\left(A_2-\dfrac{c_2}{c_4}A_4\right)x_2^*\right)\right|\approx 3.5430>\varepsilon_2;$$
	$$\left(\left(A_3-\dfrac{c_3}{c_4}A_4\right)\bullet x_1^*{x_1^*}^H\right)\left(\left(A_3-\dfrac{c_3}{c_4}
	A_4\right)\bullet x_2^*{x_2^*}^H\right)\approx -17.7092<-\varepsilon_2^2.$$
\end{example}

\section*{Declarations}
{\bf Conflict of interest} The authors declare that they have no conflict of interest.
\section*{Remark}
	This manuscript was submitted to Mathematical Programming on February 28, 2023.
	\begin{figure}[htbp]
		\centering
		\includegraphics[width=12cm,height=15cm]{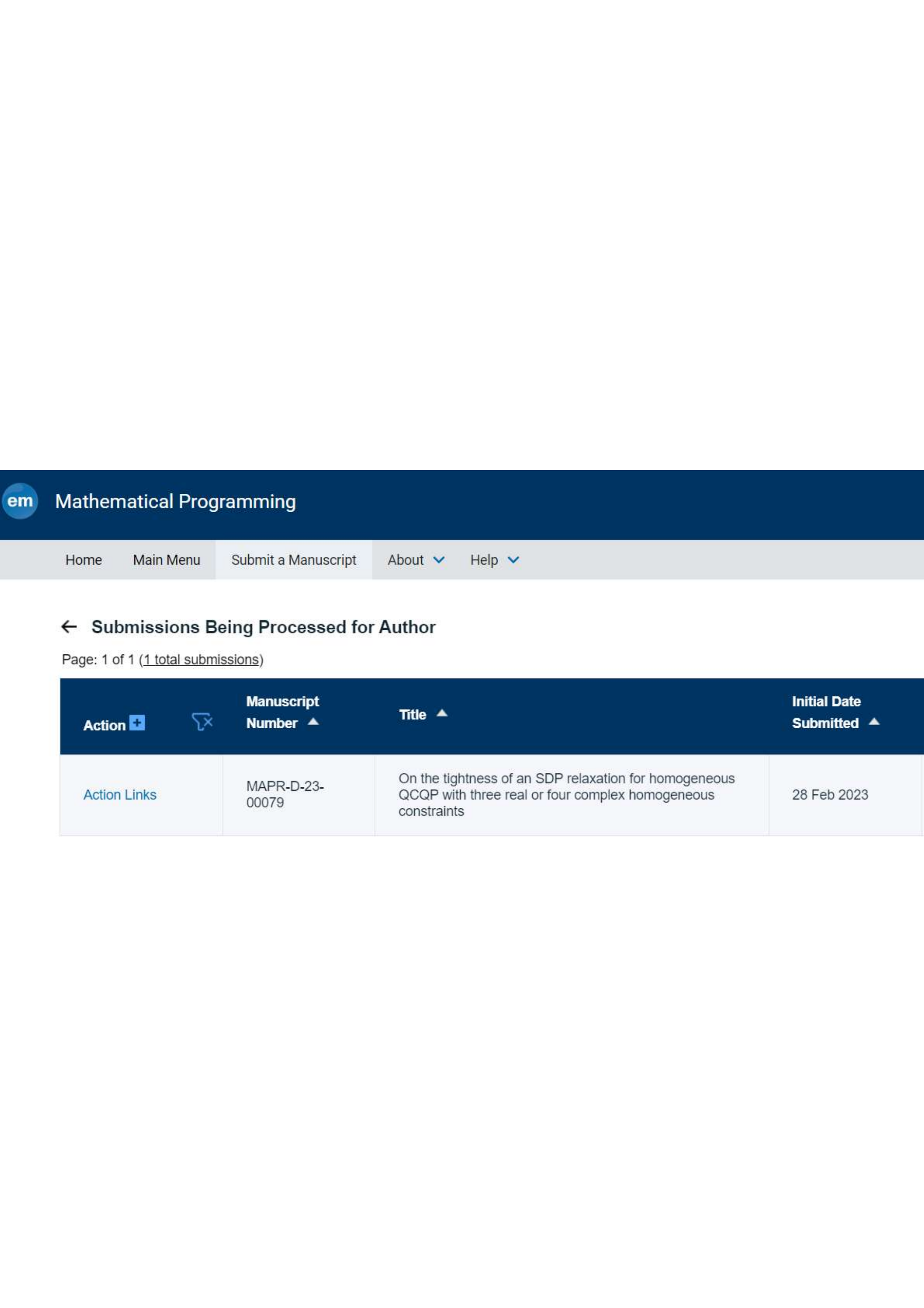}
		\label{fig:1}
	\end{figure}

\end{document}